\theoremstyle{plain} 
\newtheorem{theorem}{Theorem}[section]
\newtheorem{corollary}[theorem]{Corollary}
\newtheorem{lemma}[theorem]{Lemma}
\newtheorem{remark}[theorem]{Remark}
\newtheorem{remarks}[theorem]{Remarks}  
\begin{document}

 \title[\tiny{On the completed tensor product of two preadic algebras over a field}]{On the completed tensor product of two preadic algebras over a field}
 
 	\author[M. Taba\^{a}]{Mohamed Taba\^{a}}

 	\keywords{Completed tensor product, Complete intersection, Localization Theorem, Reduced, Domain}
 	\subjclass[2010]{13J10, 13H10.}

 	\address{DEPARTMENT OF MATHEMATICS\\
 		FACULTY OF SCIENCES\\
 		MOHAMMED V UNIVERSITY IN RABAT\\
 		RABAT\\
 		MOROCCO }
 	\email{mohamedtabaa11@gmail.com} 
 	\begin{abstract}
 		In this paper, we study the problem, initiated by Grothendieck, of the transfer of the properties of two preadic noetherian algebras over a field to their completed tensor product.
 	\end{abstract}

 	\maketitle

 	\section{Introduction}\
 	
 	All the rings are commutative with identity and all ring homomorphisms are unital. The notations
 	are those of \cite{Grothendieck2} and the terminology is that of \cite{GrothendieckDieudonne}.
 	
 	Let $k$ be a field, $A$ and $B$ two preadic noetherian $k$-algebras and $A%
 	\hat{\otimes}_{k}B$ their completed tensor product. The problem of the transfer of the properties of $A$ and $B$ to\ $A\hat{%
 		\otimes}_{k}B$ is initiated by Grothendieck in \cite[(7.5)]{Grothendieck2}, where he treated
 	the case where $A$ and $B$ are local and their residue fields are finite
 	extensions of $k$. This paper is devoted to this problem.
 	
 	It is proved in \cite[(7.5.6)]{Grothendieck2}, that if $A$ and $B$ are complete noetherian 
 	local rings and their residue fields are finite
 	extensions of $k$ then, $A\hat{\otimes}_{k}B$ is a Cohen-Macaulay ring if $A$ and $B$ are Cohen-Macaulay and it is regular if $A$ and $B$ are regular and $k$ is perfect. In \cite{Watanabe}, Watanabe, Ishikawa, Tachibana and Otsuka showed that if $A$ and $B$
 	are Gorenstein semi-local rings such that $B/\mathfrak{n}$ is a finite
 	extension of $k$ for every maximal ideal $\mathfrak{n}$ of $B$, then $A\hat{%
 		\otimes}_{k}B$ is a Gorenstein semi-local ring. Recently, Shaul generalized
 	this last result in \cite{Shaul}, where he shows that, if $A$ and $B$ are preadic Gorenstein
 	rings of finite dimension and if the ring $A\hat{\otimes}_{k}B$ is
 	noetherian of finite dimension then $A\hat{\otimes}_{k}B$ is a Gorenstein
 	ring. In Section 4 we show that, if $A$ and $B$ are preadic noetherian and if $A\hat{\otimes}_{k}B$ is noetherian, then $A\hat{\otimes}%
 	_{k}B$\ is complete intersection (resp. Gorenstein, resp. Cohen-Macaulay) if
 	and only if, for every open maximal ideal $\mathfrak{m}$\ of $A$ and every open
 	maximal ideal $\mathfrak{n}$\ of $B$ the rings $A_{\mathfrak{m}}$ and $B_{%
 		\mathfrak{n}}$ are complete intersection (resp. Gorenstein, resp.
 	Cohen-Macaulay) and it is almost Cohen-Macaulay if and only if, $A_{%
 		\mathfrak{m}}$ is Cohen-Macaulay for every open maximal ideal $\mathfrak{m}$ of $A$
 	and $B_{\mathfrak{n}}$ is almost Cohen-Macaulay for every open maximal ideal $%
 	\mathfrak{n}$\ of $B$, or, $B_{\mathfrak{n}}$ is Cohen-Macaulay for every
 	open maximal ideal $\mathfrak{n}$ of $B$ and $A_{\mathfrak{m}}$ is almost
 	Cohen-Macaulay for every open maximal ideal $\mathfrak{m}$ of $A$, and if $k$ is perfect it is regular if and only if, for every open maximal ideal $\mathfrak{m}$ of $A$ and every open maximal ideal $\mathfrak{n}$ of $B$ the rings $A_{\mathfrak{m}}$ and $B_{\mathfrak{n}}$ are regular. If $A$ and $B$ are discrete, we also find the characterizations given by Tousi and Yassemi \cite{Tousi}.
 	
 	On the other hand, it is proved in \cite[(7.5.7)]{Grothendieck2} that if $k$ is perfect (resp.
 	algebraically closed), $A$ and $B$ are complete noetherian local and their
 	residue fields are finite extensions of $k$, then if $A$ and $B$ are reduced
 	(resp. domains) so is $A\hat{\otimes}_{k}B$. In Section 5 we show that if 
 	$k$ is perfect, $A$ and $B$ are noetherian semi-local and $A\hat{\otimes}%
 	_{k}B$ is noetherian, then $A\hat{\otimes}_{k}B$ is reduced (resp. normal)
 	if and only if $A$ and $B$ are analytically reduced (resp. analytically
 	normal) (for a generalization cf. Remarks 5.9 (i)  and \cite[Remarque (7.5.8)]{Grothendieck2}), and if $k$ is
 	algebraically closed, $A$ and $B$ are noetherian local and $A\hat{\otimes}%
 	_{k}B$ is noetherian, then $A\hat{\otimes}_{k}B$ is a domain if and only if $%
 	A$ and $B$ are analytically irreducible.
 	
 	Throughout we use the results from \cite{Matsumura}, \cite{Avramov} and \cite[\textbf{0}, (7.7)]{GrothendieckDieudonne}.
 	
 	\section{Preliminaries}
 	Let $k$ be a ring, $A$ and $B$ two preadic $k$-algebras with
 	ideals of definition $\mathfrak{a}$ and $\mathfrak{b}$. The
 	tensor product topology on $A\otimes _{k}B$ is defined by the ideals $%
 	\mathfrak{a}^{i}(A\otimes _{k}B)+\mathfrak{b}^{j}(A\otimes _{k}B)$. It is
 	preadic with ideal of definition $%
 	\mathfrak{a}(A\otimes _{k}B)+\mathfrak{b}(A\otimes _{k}B)$. The completed tensor product $A$ $\hat{\otimes}_{k}B$ of $A$
 	and $B$ is the completion of $A\otimes _{k}B$\ for this topology
 	and it is isomorphic to $\widehat{A}\hat{\otimes}_{k}\widehat{B}$ .
 	
 	\begin{lemma}[{cf. \cite[V.B.2]{Serre}}]    Let $k$ be a ring, $A$ and $B$ two preadic $%
 		k $-algebras with ideals of definition $\mathfrak{a}$ and $\mathfrak{b}$. Let $\mathfrak{r}$ be the ideal $%
 		\mathfrak{a}(A\otimes _{k}B)+\mathfrak{b}(A\otimes _{k}B)$
 		of $A\otimes _{k}B$ and $E$ the ring $A$ $\hat{\otimes}_{k}B$.    
 		If  $\mathfrak{a}$ and  $\mathfrak{b}$ are finitely generated, then 
 		{\footnotesize \ }
 		\begin{itemize}
 		\item[(i)] $E$ is adic with ideal of definition $\mathfrak{r}E$.
 		
 		\item[(ii)] $\mathfrak{r}E$ is contained in the radical of $E$ and the maximal ideals of $E$ correspond to those of $(A/a)\otimes_k(B/b)$.
 		
 		\item[(iii)] $E$ is noetherian if and only if $(A/\mathfrak{a})\otimes _{k}(B/%
 		\mathfrak{b})${\footnotesize \ } is noetherian.
 		\end{itemize}
 	\end{lemma}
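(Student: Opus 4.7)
\medskip
\noindent\textbf{Proof plan.} The strategy is to reduce all three assertions to well-known facts about completions at a finitely generated ideal, once one identifies the tensor product topology on $R := A\otimes_k B$ with the $\mathfrak{r}$-adic topology. The key preliminary step will be to establish, for all $i,j \ge 1$, the sandwich
\[
\mathfrak{r}^{i+j} \;\subseteq\; \mathfrak{a}^{i} R + \mathfrak{b}^{j} R \;\subseteq\; \mathfrak{r}^{\min(i,j)},
\]
by expanding $(\mathfrak{a}R+\mathfrak{b}R)^{i+j}$ and observing that every summand $\mathfrak{a}^{s}\mathfrak{b}^{t}R$ with $s+t=i+j$ satisfies $s\ge i$ or $t\ge j$, while the right-hand inclusion is immediate from $\mathfrak{a}^{i}R,\mathfrak{b}^{j}R\subseteq \mathfrak{r}^{\min(i,j)}$. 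This shows that the two topologies agree, so $E$ is the $\mathfrak{r}$-adic completion of $R$; and the finite generation of $\mathfrak{a}$ and $\mathfrak{b}$ transfers to $\mathfrak{r}$.

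For (i), I would invoke the standard fact that the completion of a ring at a finitely generated ideal is adic, with the extension of the ideal as ideal of definition --- a fact that rests on the identification $\mathfrak{r}^{n}E=\ker(E\twoheadrightarrow R/\mathfrak{r}^{n})$, available precisely because $\mathfrak{r}$ is finitely generated (cf.\ \cite[V.B.2]{Serre}). For (ii), the inclusion $\mathfrak{r}E\subseteq\mathrm{rad}(E)$ follows at once from the convergence of $\sum_{n\ge 0}x^{n}$ in $E$ for $x\in\mathfrak{r}E$, and the identification $E/\mathfrak{r}E\cong (A/\mathfrak{a})\otimes _{k}(B/\mathfrak{b})$ follows from the compatibility of completion with the passage to a discrete quotient; the radical inclusion then yields the desired bijection of maximal ideals.

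For (iii), the forward implication is formal. For the converse, assuming $(A/\mathfrak{a})\otimes _{k}(B/\mathfrak{b})\cong E/\mathfrak{r}E$ is noetherian, I would argue that the associated graded $\mathrm{gr}_{\mathfrak{r}E}(E)=\bigoplus _{n\ge 0}\mathfrak{r}^{n}E/\mathfrak{r}^{n+1}E$ is a quotient of a polynomial ring over $E/\mathfrak{r}E$ in finitely many variables (indexed by generators of $\mathfrak{r}$), hence is noetherian, and then invoke the classical theorem that a separated complete filtered ring with noetherian associated graded is itself noetherian.

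The main obstacle will be the topological identification of the tensor product topology with an adic one, which is precisely the point where the finite generation hypothesis on $\mathfrak{a}$ and $\mathfrak{b}$ is essential; without it, the completion may fail to be adic, and in particular the associated graded argument used for (iii) would break down.
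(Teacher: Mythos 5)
Your plan is correct and complete in outline, and it is essentially the argument behind the paper's own treatment: the paper gives no proof but defers to Serre \cite[V.B.2]{Serre}, where the proof runs exactly as you describe. The cofinality sandwich $\mathfrak{r}^{i+j}\subseteq \mathfrak{a}^{i}R+\mathfrak{b}^{j}R\subseteq \mathfrak{r}^{\min(i,j)}$ identifies the tensor product topology with the $\mathfrak{r}$-adic one, after which (i) and (ii) are the standard properties of completion along a finitely generated ideal (including $\mathfrak{r}^{n}E=\ker(E\to R/\mathfrak{r}^{n})$ and $E/\mathfrak{r}E\cong (A/\mathfrak{a})\otimes_{k}(B/\mathfrak{b})$), and (iii) follows from the associated graded criterion exactly as you indicate.
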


 	It follows from (ii) that if $\mathfrak{M}$ is a maximal ideal of $E$, its
 	inverse image in $A$ contains $\mathfrak{a}$ and its inverse image in $B$
 	contains $\mathfrak{b}$.
 	
 	Suppose that the $k$-algebra $A/\mathfrak{a}$ is finite then, by (iii) ( resp. (ii)), if $B/\mathfrak{b}$ is noetherian (resp. semi-local) then $E$ is noetherian (resp. semi-local ).

 	\begin{lemma}    Let $k$ be a ring, $A$ and $B$ two preadic $%
 		k $-algebras with finitely generated ideals of definition, $C$ the ring $A\otimes _{k}B$ and $E$ the
 		ring $A\hat{\otimes}_{k}B$. Suppose that $E$ is noetherian. Then                                                                 
 		\begin{itemize}
 			\item[(i)] If $\mathfrak{r}^{\prime}$ is an ideal of $C$, then   $E/\mathfrak{r}^{\prime} E$ is the 
 			completion of $C/\mathfrak{r}^{\prime}$.
 			\item[(ii)] If  $\mathfrak{a}^{\prime }$ and $\mathfrak{b}^{\prime }$ are ideals of $A$ and $B$, then
 			\begin{itemize}
 			 \item[(a)]  We have $$E/(\mathfrak{a}^{\prime }E+\mathfrak{b}^{\prime }E)\cong(A/\mathfrak{a}^{\prime })\hat{\otimes}_{k}(B/\mathfrak{b}^{\prime }).$$
 			 
 			\item[(b)] If $k$ is a field, $\mathfrak{a}^{\prime }$ is open in $A$ and $\mathfrak{b}^{\prime }$
 			is open in $B$, then $\mathfrak{a}^{\prime }E+\mathfrak{b}^{\prime }E=E$ if
 			and only if $\mathfrak{a}^{\prime }=A$ or $\mathfrak{b}^{\prime }=B$.
 			\end{itemize}
 		\end{itemize}
 	\end{lemma}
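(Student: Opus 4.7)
My plan is to prove (i) first as the main structural assertion and then deduce both parts of (ii) from it.

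For (i), by Lemma 2.1 the ring $E$ is noetherian and complete in the $\mathfrak{r}E$-adic topology. Consequently, for any ideal $\mathfrak{r}'$ of $C$, the quotient $E/\mathfrak{r}'E$ is a finitely generated $E$-module, hence complete in its own $\mathfrak{r}(E/\mathfrak{r}'E)$-adic topology, giving
\[
E/\mathfrak{r}'E \;=\; \varprojlim_n E/(\mathfrak{r}^n+\mathfrak{r}')E.
\]
I would then invoke the standard fact---valid because $\mathfrak{r}$ is finitely generated---that the canonical map $C/\mathfrak{r}^n \to E/\mathfrak{r}^nE$ is an isomorphism, and deduce that $C/(\mathfrak{r}^n+\mathfrak{r}') \to E/(\mathfrak{r}^n+\mathfrak{r}')E$ is also an isomorphism by dividing out the image of $\mathfrak{r}'$. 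Passing to the inverse limit exhibits $E/\mathfrak{r}'E$ as the completion of $C/\mathfrak{r}'$ in the induced preadic topology.

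For (ii)(a), I apply (i) to $\mathfrak{r}' = \mathfrak{a}'C + \mathfrak{b}'C$. The canonical algebra isomorphism $C/\mathfrak{r}'\cong (A/\mathfrak{a}')\otimes_k(B/\mathfrak{b}')$ identifies the left-hand side with the tensor product of the preadic $k$-algebras $A/\mathfrak{a}'$ and $B/\mathfrak{b}'$, equipped with ideals of definition $(\mathfrak{a}+\mathfrak{a}')/\mathfrak{a}'$ and $(\mathfrak{b}+\mathfrak{b}')/\mathfrak{b}'$. A routine comparison shows that the topology induced from $C$ on this quotient matches the tensor-product preadic topology on the right; completing then yields the desired isomorphism $E/(\mathfrak{a}'E+\mathfrak{b}'E)\cong (A/\mathfrak{a}')\hat{\otimes}_k(B/\mathfrak{b}')$.

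For (ii)(b), one direction is immediate. For the converse, by (ii)(a) the equality $\mathfrak{a}'E+\mathfrak{b}'E=E$ translates into $(A/\mathfrak{a}')\hat{\otimes}_k(B/\mathfrak{b}')=0$. Openness of $\mathfrak{a}'$ means $\mathfrak{a}^n\subseteq\mathfrak{a}'$ for some $n$, so the ideal $(\mathfrak{a}+\mathfrak{a}')/\mathfrak{a}'$ is nilpotent; thus the preadic topology on $A/\mathfrak{a}'$ is discrete, and similarly for $B/\mathfrak{b}'$. When both factors are discrete, the tensor-product topology is again discrete, so the completion coincides with $(A/\mathfrak{a}')\otimes_k(B/\mathfrak{b}')$. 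This vanishing, together with the fact that $k$ is a field, forces $A/\mathfrak{a}'=0$ or $B/\mathfrak{b}'=0$. The argument is essentially careful bookkeeping with preadic topologies; the one place demanding attention is the final step of (ii)(b), where one must exploit openness via nilpotency of the quotient ideal of definition rather than through the (generally false) containment $\mathfrak{a}'\supseteq\mathfrak{a}$.
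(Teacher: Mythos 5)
Your proof is correct and follows the same overall route as the paper: part (i) is the structural core, (ii)(a) follows by identifying $C/(\mathfrak{a}'C+\mathfrak{b}'C)$ topologically with $(A/\mathfrak{a}')\otimes_k(B/\mathfrak{b}')$ and completing, and (ii)(b) follows from discreteness of the quotients together with the fact that a tensor product of nonzero vector spaces over a field is nonzero. The only cosmetic difference is in (i), where the paper argues that the completion of $C/\mathfrak{r}'$ is $E/\overline{i(\mathfrak{r}')}$ and that $\overline{i(\mathfrak{r}')}=\mathfrak{r}'E$ because the noetherian ring $E$ is a Zariski ring (so all its ideals are closed), whereas you carry out the equivalent inverse-limit computation directly, resting on the same noetherian-plus-completeness input.
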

 	
 	\begin{proof}
 		(i)  If $i:C\to E$ is the canonical homomorphism, it suffices to show that $\mathfrak{r}^{\prime}E=\overline{i(\mathfrak{r}^{\prime})}$. It is clear that $\mathfrak{r}^{\prime}E\subset\overline{i(\mathfrak{r}^{\prime})}$. On the other hand, since $E$ is noetherian, by Lemma 2.1 (ii), it is a Zariski ring. Hence the ideal $\mathfrak{r}^{\prime}E$ of $E$ is closed in $E$, and therefore $\overline{i(\mathfrak{r}^{\prime})}\subset \mathfrak{r}^{\prime}E$.
 		
 		(ii) (a) The canonical isomorphism  $C/(\mathfrak{a}^{\prime }C+\mathfrak{b}^{\prime }C)\rightarrow(A/\mathfrak{a}^{\prime })\otimes_k(B/\mathfrak{b}^{\prime })$ is a topological isomorphism. By (i),  
 		$E/(\mathfrak{a}^{\prime }E+\mathfrak{b}^{\prime }E)$ is the completion of $C/(\mathfrak{a}^{\prime }C+\mathfrak{b}^{\prime }C)$. Hence the induced homomorphism $E/(\mathfrak{a}^{\prime }E+\mathfrak{b}^{\prime }E)\rightarrow(A/\mathfrak{a}^{\prime })\hat\otimes_k(B/\mathfrak{b}^{\prime })$ is an isomorphism. 
 		
 		(b) The rings $A/\mathfrak{a}^{\prime }$ and $B/\mathfrak{b}^{\prime }$ are discrete, hence by (a) $E/(\mathfrak{a}^{\prime }E+\mathfrak{b}^{\prime }E)$ is isomorphic to $(A/\mathfrak{a}^{\prime })\otimes_k(B/\mathfrak{b}^{\prime })$. The equivalence follows from this isomorphism.	
 	\end{proof}

 	We find \cite[Proposition 2.2.8]{Tavanfar}.

 	\section{The key result} 
 	
 	\begin{theorem}  Let $k$ be a field, $A$ and $B$ two preadic
 		noetherian $k$-algebras with ideals of definition  $\mathfrak{a}$ and  $%
 		\mathfrak{b}$. Let $E$\ be the ring$\ A\hat{\otimes}%
 		_{k}B$. If $E$ is noetherian, then
 		
 		(i) The homomorphism $A\rightarrow E${\footnotesize \ }\ is flat.
 		
 		(ii) The homomorphism $A\rightarrow E$ is faithfully flat if and only if $A$%
 		\ is a Zariski ring.
 		
 		(iii) If $\mathfrak{a}^{\prime }$ is an ideal of $A$ containing $\mathfrak{a}
 		$ then l'homomorphism $B\rightarrow E/\mathfrak{a}^{\prime }E$ is flat.%
 		 	\end{theorem}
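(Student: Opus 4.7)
For (i), the strategy is to apply the local flatness criterion to $A \to E$ with respect to the ideal $\mathfrak{a}$. Two ingredients are required. First, $E$ is $\mathfrak{a}$-adically ideally separated as an $A$-module: since $E$ is noetherian and $\mathfrak{a}E \subseteq \mathfrak{r}E \subseteq \operatorname{rad}(E)$ by Lemma~2.1(ii), Krull's intersection theorem forces every finitely generated $E$-module---in particular each $\mathfrak{J} \otimes_A E$ for a finitely generated ideal $\mathfrak{J} \subseteq A$---to be $\mathfrak{a}E$-adically separated. Second, one must show $E/\mathfrak{a}^n E$ is flat over $A_n := A/\mathfrak{a}^n$ for every $n \geq 1$.

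For this second point, Lemma~2.2(ii)(a) identifies $E/\mathfrak{a}^n E$ with $S := A_n \hat\otimes_k B$, which is noetherian as a quotient of $E$. For any finitely generated ideal $\mathfrak{J} \subseteq A_n$, tensoring $0 \to \mathfrak{J} \to A_n \to A_n/\mathfrak{J} \to 0$ with $B/\mathfrak{b}^m$ over the field $k$ is exact; passing to $\varprojlim_m$ stays exact since the transition maps are surjective (so $\varprojlim{}^1 = 0$), yielding
\[
0 \longrightarrow \mathfrak{J} \hat\otimes_k B \longrightarrow A_n \hat\otimes_k B \longrightarrow (A_n/\mathfrak{J}) \hat\otimes_k B \longrightarrow 0.
\]
Because $\mathfrak{J}$ is finitely presented over the noetherian ring $A_n$, tensoring commutes with this Mittag--Leffler limit, so $\mathfrak{J} \otimes_{A_n} S \cong \mathfrak{J} \hat\otimes_k B$, and the injectivity of the first map gives $\operatorname{Tor}_1^{A_n}(A_n/\mathfrak{J}, S) = 0$ for every finitely generated $\mathfrak{J}$. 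Hence $S = E/\mathfrak{a}^n E$ is $A_n$-flat, completing~(i).

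For~(ii), once~(i) holds, faithful flatness of $A \to E$ is equivalent to $\mathfrak{m} E \neq E$ for every maximal ideal $\mathfrak{m} \subseteq A$. If $A$ is Zariski, every such $\mathfrak{m}$ is open, so Lemma~2.2(ii)(b) (applied with $\mathfrak{b}' = \mathfrak{b}$) yields $\mathfrak{m} E + \mathfrak{b} E \neq E$, hence $\mathfrak{m} E \neq E$. Conversely, if $A$ is not Zariski, some maximal $\mathfrak{m}$ does not contain $\mathfrak{a}$, so $\mathfrak{a} + \mathfrak{m} = A$ and therefore $\mathfrak{a} E + \mathfrak{m} E = E$; the cyclic $E$-module $E/\mathfrak{m} E$ then satisfies $\mathfrak{a} E \cdot (E/\mathfrak{m} E) = E/\mathfrak{m} E$, and since $\mathfrak{a} E \subseteq \operatorname{rad}(E)$ by Lemma~2.1(ii), Nakayama forces $E/\mathfrak{m} E = 0$. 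For~(iii), since $\mathfrak{a} \subseteq \mathfrak{a}'$, $A/\mathfrak{a}'$ is a discrete noetherian $k$-algebra, and Lemma~2.2(ii)(a) gives $E/\mathfrak{a}' E \cong B \hat\otimes_k (A/\mathfrak{a}')$, which is noetherian as a quotient of $E$; applying~(i) with the roles of $A$ and $B$ swapped gives flatness of $B \to E/\mathfrak{a}' E$.

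The main obstacle is the discrete-base case in~(i): showing $A_n \hat\otimes_k B$ is flat over $A_n$ cannot rely on the usual ``completion preserves flatness'' theorem, because the pre-completion ring $A_n \otimes_k B$ is typically non-noetherian. Extracting the needed Tor-vanishing instead requires exploiting the Mittag--Leffler structure of the defining inverse system together with the noetherianness of the completed object itself.
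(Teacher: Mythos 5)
Your route is genuinely different from the paper's: for (i) the paper simply cites \cite[Corollary 5.4]{Tabaa2} and for (iii) it cites \cite[Theorem 3.2]{Tabaa2} (flatness passes to completions) after observing that $(A/\mathfrak{a}')\otimes_k B$ is $B$-flat, whereas you give a self-contained proof of (i) via the local flatness criterion and then deduce (iii) from (i) by symmetry, which is clean and legitimate. Your (ii) is essentially the paper's argument (with a Nakayama argument replacing the paper's appeal to lifting of maximal ideals in the ``only if'' direction), and your use of Krull's intersection theorem together with $\mathfrak{a}E\subseteq\operatorname{rad}(E)$ to get ideal separatedness is correct.

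There is, however, a gap in the central step of (i). You assert that because $\mathfrak{J}$ is finitely presented over $A_n$, the natural map $\mathfrak{J}\otimes_{A_n}\varprojlim_m N_m\rightarrow\varprojlim_m(\mathfrak{J}\otimes_{A_n}N_m)$ (with $N_m=A_n\otimes_k B/\mathfrak{b}^m$) is an isomorphism. Finite presentation plus surjective transition maps gives surjectivity of this map, but not injectivity: choosing a finite free resolution of $\mathfrak{J}$, the obstruction to injectivity is $\varprojlim^1_m\operatorname{Tor}_1^{A_n}(\mathfrak{J},N_m)$, and ``finitely presented module tensored against a Mittag--Leffler limit'' is not a valid general commutation principle. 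What saves the argument here is a fact you never invoke: each $N_m=A_n\otimes_k(B/\mathfrak{b}^m)$ is a \emph{free} $A_n$-module (a $k$-basis of $B/\mathfrak{b}^m$ gives an $A_n$-basis), so the termwise $\operatorname{Tor}_1$ vanishes and the obstruction is zero. Once you use this, an even shorter finish is available and bypasses the $\mathfrak{J}$-by-$\mathfrak{J}$ computation entirely: the Mittag--Leffler exact sequence $0\rightarrow\varprojlim_m N_m\rightarrow\prod_m N_m\rightarrow\prod_m N_m\rightarrow 0$ exhibits $S=\varprojlim_m N_m$ as the kernel of a surjection between flat $A_n$-modules (products of flat modules over the noetherian ring $A_n$ are flat, by Chase's theorem), hence $S$ is $A_n$-flat. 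With that repair the proof is complete; as written, the justification of the key step would not stand.
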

 	
 	\begin{proof} (i) This follows from ~\cite[Corollary 5.4]{Tabaa2}.
 		
 		(ii) Let $\mathfrak{m}$ be a maximal ideal of $A$. If the homomorphism $%
 		A\rightarrow E$ is faithfully flat there exists a maximal ideal $\mathfrak{M}
 		$ of $E$ such that $\mathfrak{m}$ is its inverse image in $A$; $\mathfrak{M}$
 		contains $\mathfrak{a}E$ and therefore $\mathfrak{m}$ contains $\mathfrak{a}$%
 		. Conversely, if $A$ is a Zariski ring, $\mathfrak{m}$ contains $\mathfrak{a}
 		$, so, by Lemma 2.2 (ii) (b) $\mathfrak{m}E+\mathfrak{b}E\neq E$ and a fortiori $%
 		\mathfrak{m}E\neq E$.
 		
 		(iii)\ Let $C$ denote the ring $A\otimes _{k}B$. By Lemma 2.2 (i), $E/%
 		\mathfrak{a}^{\prime }E$ is the completion of $C/\mathfrak{a}%
 		^{\prime }C$. Since $\mathfrak{a}^{\prime }$\ contains $\mathfrak{a}$, the
 		topology of $C/\mathfrak{a}^{\prime }C$  is the $%
 		\mathfrak{b}$-preadic topology, hence $E/\mathfrak{a}^{\prime }E$ is the
 		completion of $C/\mathfrak{a}^{\prime }C$ for this topology. But, $%
 		C/\mathfrak{a}^{\prime }C$ is isomorphic to $(A/\mathfrak{a}^{\prime
 		})\otimes _{k}B$ and $A/\mathfrak{a}^{\prime }$ is flat over $k$, so, $C/%
 		\mathfrak{a}^{\prime }C$\ is flat over $B$. By \cite[Theorem 3.2]{Tabaa2}, $E/%
 		\mathfrak{a}^{\prime }E$ is also flat over $B$.
 	\end{proof}
 	
 	\begin{corollary}    Let $k$ be a field, $A$ and $B$ two
 		noetherian local $k$-algebras with maximal ideals  $\mathfrak{m}$ and $%
 		\mathfrak{n }$. Let $E$\ be the ring $\ A\hat{\otimes}_{k}B$.
 		If $E$ is noetherian, then
 		\begin{itemize}

 		\item[(i)] $\dim(E)=\dim (A)+\dim (B)+\inf (\ tr.\deg _{k}(A/\mathfrak{m}),tr.\deg
 		_{k}(B/\mathfrak{n})).$
 		
 		\item[(ii)] $\dim (E)$ is finite if and only if, $tr.\deg _{k}(A/%
 		\mathfrak{m})$ or $tr.\deg _{k}(B/\mathfrak{n}_{s})$ is finite.
 		\end{itemize}
 	\end{corollary}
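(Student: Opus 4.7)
The plan is to reduce $\dim(E)$ to the classical dimension of the tensor product of two field extensions of $k$, by iterating the flat fiber dimension formula with the help of Theorem 3.1 (i) and Lemma 2.2. Set $F=A/\mathfrak{m}$ and $G=B/\mathfrak{n}$, and take $\mathfrak{m}$ and $\mathfrak{n}$ as ideals of definition of $A$ and $B$. By Lemma 2.1 (ii), the maximal ideals $\mathfrak{M}$ of $E$ are in bijection with the maximal ideals $\mathfrak{M}_{0}$ of $(A/\mathfrak{m})\otimes_{k}(B/\mathfrak{n})=F\otimes_{k}G$, and every such $\mathfrak{M}$ contracts to $\mathfrak{m}$ in $A$ and to $\mathfrak{n}$ in $B$.

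Fix a maximal ideal $\mathfrak{M}$ of $E$ above $\mathfrak{M}_{0}$. By Theorem 3.1 (i), $A\to E$ is flat, hence $A_{\mathfrak{m}}\to E_{\mathfrak{M}}$ is flat local and the fiber dimension formula gives $\dim E_{\mathfrak{M}}=\dim A+\dim(E/\mathfrak{m}E)_{\mathfrak{M}}$. By Lemma 2.2 (ii)(a), $E/\mathfrak{m}E\cong F\hat{\otimes}_{k}B$, which is noetherian as a quotient of $E$; since $F$ is a field and hence discrete, applying Theorem 3.1 (i) again to $F$ and $B$ shows that $B\to F\hat{\otimes}_{k}B$ is flat, and a second use of the fiber formula combined with the identification $(F\hat{\otimes}_{k}B)/\mathfrak{n}(F\hat{\otimes}_{k}B)\cong F\hat{\otimes}_{k}G=F\otimes_{k}G$ (both factors discrete, Lemma 2.2 (ii)(a)) yields $\dim(E/\mathfrak{m}E)_{\mathfrak{M}}=\dim B+\dim(F\otimes_{k}G)_{\mathfrak{M}_{0}}$. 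Combining the two identities and taking the supremum over $\mathfrak{M}$,
\[
\dim E=\dim A+\dim B+\dim(F\otimes_{k}G).
\]

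To conclude I would invoke the classical formula $\dim(F\otimes_{k}G)=\inf\bigl(tr.\deg_{k}F,\,tr.\deg_{k}G\bigr)$ for the tensor product of two field extensions of $k$, with the convention $\inf(+\infty,n)=n$. This yields (i), and (ii) follows at once because $\dim A$ and $\dim B$ are finite. The only step that is not just a citation is this tensor-of-fields dimension formula in full generality: I would reduce to the finitely generated case by exhibiting a chain of primes $(x_{1}-g_{1},\ldots,x_{h}-g_{h})$ in $G[x_{1},\ldots,x_{r}]$, with the $g_{i}\in G$ algebraically independent over $k$ and $h=\inf(r,tr.\deg_{k}G)$ (assuming first that $F=k(x_{1},\ldots,x_{r})$ with $r=tr.\deg_{k}F$ finite), checking that it survives the localization at $k[x_{1},\ldots,x_{r}]\setminus\{0\}$, together with the matching upper bound on the heights of surviving primes, and then passing to $F\otimes_{k}G$ via the integral extension $k(x_{1},\ldots,x_{r})\otimes_{k}G\to F\otimes_{k}G$; the case of infinite transcendence degree is handled by a direct limit argument.
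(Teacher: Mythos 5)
Your proof follows the same route as the paper's: apply the dimension formula for flat local homomorphisms twice, using the flatness from Theorem 3.1 and the identifications from Lemma 2.2, to reduce everything to $\dim\bigl((A/\mathfrak{m})\otimes_{k}(B/\mathfrak{n})\bigr)$. The only difference is that the paper simply cites Sharp's theorem for $\dim(F\otimes_{k}G)=\inf\bigl(tr.\deg_{k}F,\,tr.\deg_{k}G\bigr)$ where you instead sketch a proof of that classical formula; the reduction to it is the substance of the argument and is carried out correctly.
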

 	
 	\begin{proof}  (i) Let $\mathfrak{Q}$ be a maximal ideal of $E$. The inverse image
 		of $\mathfrak{Q}$ in $A$ is $\mathfrak{m}$ and the inverse image of $%
 		\mathfrak{Q}$ in $B$ is $\mathfrak{n}$. By the above theorem, the local
 		homomorphisms $A\rightarrow E_{\mathfrak{Q}}$ and $B\rightarrow E_{\mathfrak{%
 				Q}}/\mathfrak{m}E_{\mathfrak{Q}}$ are flat. The closed fiber of the second
 		homomorphism is isomorphic to $(E/\mathfrak{r}E)_{\mathfrak{Q}/\mathfrak{r}%
 			E} $, where $\mathfrak{r}$ is the ideal $(\mathfrak{a}\otimes
 		_{k}B)+(A\otimes _{k}\mathfrak{b})$ of $A\otimes _{k}B.$ Hence
 		$$
 		\dim(E_{\mathfrak{Q}})=\dim(A)+\dim(E_{\mathfrak{Q}}/\mathfrak{m}E_{\mathfrak{Q}})
 		$$
 		and
 		$$
 		\dim(E_{\mathfrak{Q}}/\mathfrak{m}E_{\mathfrak{Q}})=\dim(B)+\dim ((E/\mathfrak{r}E)_{\mathfrak{Q}/\mathfrak{r}E}),
 		$$
 		whence
 		$$
 		\dim(E_{\mathfrak{Q}})=\dim(A)+\dim(B)+\dim((E/\mathfrak{r}E)_{\mathfrak{Q}/\mathfrak{r}E}).
 		$$ 
 		We deduce that 
 		$$
 		\dim(E)=\dim(A)+\dim(B)+\dim(E/\mathfrak{r}E).
 		$$ 
 		On the other hand, by \cite{Sharp1} we have
 		$$
 		\dim((A/\mathfrak{m})\otimes_{k}(B/\mathfrak{n}))= \inf (\ tr.\deg _{k}(A/\mathfrak{m}),tr.\deg
 		_{k}(B/\mathfrak{n})),
 		$$ 
 		and since $E/\mathfrak{r}E$ is isomorphic to $(A/\mathfrak{m})\otimes_{k}(B/\mathfrak{n})$, then
 		$$
 		\dim(E/\mathfrak{r}E)= \inf (\ tr.\deg _{k}(A/\mathfrak{m}),tr.\deg
 		_{k}(B/\mathfrak{n})).
 		$$
 		Whence
 		$$
 		\dim(E)=\dim(A)+\dim(B)+ \inf (\ tr.\deg _{k}(A/\mathfrak{m}),tr.\deg
 		_{k}(B/\mathfrak{n})).
 		$$ 
 		
 		(ii) This follows from (i).
 	\end{proof}
 	
 	\begin{corollary}    Let $k$ be a field, $A$ and $B$ two preadic
 		noetherian $k$-algebras, $E$\ the ring $\ A\hat{\otimes}_{k}B$ and let $\mathbf{R}$ be a
 		local property of noetherian local rings which descends by faithful
 		flatness. Consider the following conditions:
 		\begin{itemize}
 		\item[(i)] $E$ satisfies $\mathbf{R}$.
 		
 		\item[(ii)] $\widehat{A}$ satisfies $\mathbf{R}$.
 		
 		\item[(iii)] For every open maximal ideal $\mathfrak{m}$ of $A$ the ring $A_{%
 			\mathfrak{m}}$ satisfies $\mathbf{R}$.
 		\end{itemize}
 		\hspace{.5cm}	
 		If $E$ is noetherian, then (i) $\Rightarrow $ (ii) $\Rightarrow $ (iii).
 	\end{corollary}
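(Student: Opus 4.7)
My plan is, in each implication, to build a faithfully flat local homomorphism from the ``smaller'' ring to the ``bigger'' one and then apply the assumption that $\mathbf{R}$ descends by faithful flatness. The key input for the first implication will be Theorem 3.1 applied to $\widehat{A},\widehat{B}$ in place of $A,B$ (legitimate because $E \cong \widehat{A}\hat\otimes_{k}\widehat{B}$); the key input for the second will be the classical flatness of the completion map $A \to \widehat{A}$.

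To handle (i) $\Rightarrow$ (ii), I would first observe that $\widehat{A}$ is a Zariski ring, being $\mathfrak{a}\widehat{A}$-adically complete, so Theorem 3.1 (ii) yields that $\widehat{A} \to E$ is faithfully flat. For any maximal ideal $\mathfrak{M}$ of $\widehat{A}$, faithful flatness provides a prime $\mathfrak{Q}$ of $E$ above $\mathfrak{M}$, and the induced local map $\widehat{A}_{\mathfrak{M}} \to E_{\mathfrak{Q}}$ remains faithfully flat. Since $E$ satisfies $\mathbf{R}$, so does $E_{\mathfrak{Q}}$, and by descent $\widehat{A}_{\mathfrak{M}}$ satisfies $\mathbf{R}$; as $\mathfrak{M}$ was arbitrary, (ii) follows.

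For (ii) $\Rightarrow$ (iii), I would pick an open maximal ideal $\mathfrak{m}$ of $A$; openness together with primality forces $\mathfrak{m} \supset \mathfrak{a}$, so $\mathfrak{m}\widehat{A}$ is a maximal ideal of $\widehat{A}$ lying over $\mathfrak{m}$, and the canonical local homomorphism $A_{\mathfrak{m}} \to \widehat{A}_{\mathfrak{m}\widehat{A}}$ is faithfully flat (flatness comes from $A \to \widehat{A}$ and localization; the local hypothesis upgrades this to faithful flatness). Hypothesis (ii) then gives that $\widehat{A}_{\mathfrak{m}\widehat{A}}$ satisfies $\mathbf{R}$, and descent transfers this to $A_{\mathfrak{m}}$.

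The arguments are conceptually direct; the only delicate point is fixing the convention that ``$R$ satisfies $\mathbf{R}$'' for a non-local noetherian ring $R$ means $R_{\mathfrak{p}}$ satisfies $\mathbf{R}$ at every prime $\mathfrak{p}$, so that the descent step indeed applies to the specific localizations $E_{\mathfrak{Q}}$ and $\widehat{A}_{\mathfrak{m}\widehat{A}}$ produced above. With this convention, no structure of $E$ beyond the two faithfully flat local maps just exhibited is needed; in particular, the failure of $A \to E$ itself to be faithfully flat in the non-Zariski case is sidestepped by going through $\widehat{A}$.
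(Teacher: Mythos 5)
Your proposal is correct and follows the paper's own route: the implication (i) $\Rightarrow$ (ii) is obtained exactly as in the paper, by noting that $\widehat{A}$ is a Zariski ring so that Theorem 3.1 (ii) makes $\widehat{A}\rightarrow E$ faithfully flat, and then descending. For (ii) $\Rightarrow$ (iii) the paper simply cites Proposition 6.3 of Greco--Salmon, whereas you unwind that statement into a direct (and valid) argument via the faithfully flat local map $A_{\mathfrak{m}}\rightarrow \widehat{A}_{\mathfrak{m}\widehat{A}}$; this is the same idea, just made explicit.
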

 	
 	\begin{proof}	  Since $\widehat{A}$ is a Zariski ring, the implication (i) $ %
 		\Rightarrow $ (ii) follows from the fact that, by Theorem 3.1 (ii), the
 		homomorphism $\widehat{A}\rightarrow E$ is faithfully flat.
 		
 		The implication (ii) $\Rightarrow $ (iii) follows from \cite[Proposition 6.3]{Greco}.
 	\end{proof}

 	\section{Complete intersection, Gorenstein, Cohen-Macaulay and almost Cohen-Macaulay Rings.}
 	
 	\begin{lemma}     Let $k$ be a field, $L$ and $M$ two extension fields
 		of $k$. Suppose that  $L\otimes _{k}M$ is noetherian. Then
 		
 		(i) $L\otimes _{k}M$ is complete intersection.
 		
 		(ii) If $k$ is perfect, then $L\otimes _{k}M$ is regular.
 	\end{lemma}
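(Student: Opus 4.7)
Both properties being local on $\operatorname{Spec}$, it suffices to verify complete intersection (resp.\ regularity) at each maximal ideal of $E = L \otimes_k M$. My plan is first to reduce to the case where $L$ is finitely generated over $k$: the noetherianness of $L \otimes_k M$ forces such a finiteness (in the spirit of V\'amos), and in any case, for each fixed maximal ideal one can descend to a sufficiently large finitely generated subfield $L_0 \subseteq L$ via flat base change.

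Assuming $L$ is finitely generated over $k$, I would fix a transcendence basis $t_1, \ldots, t_r$ and write $L = K(\theta_1, \ldots, \theta_s)$ where $K = k(t_1, \ldots, t_r)$ and each $\theta_j$ is algebraic over $K_{j-1} := K(\theta_1, \ldots, \theta_{j-1})$. I would then lift the minimal polynomial of $\theta_j$ over $K_{j-1}$ to some $\tilde g_j \in K[X_1, \ldots, X_j]$ that is monic in $X_j$. This gives
\[
E \;\cong\; R[X_1, \ldots, X_s]/(\tilde g_1, \ldots, \tilde g_s), \qquad R := K \otimes_k M,
\]
where $R$ is a localization of the polynomial ring $M[t_1, \ldots, t_r]$, hence a regular noetherian ring, as is $R[X_1, \ldots, X_s]$. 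Since each $\tilde g_j$ is monic in $X_j$, the sequence $(\tilde g_1, \ldots, \tilde g_s)$ is regular in $R[X_1, \ldots, X_s]$: at step $j$, the leading coefficient in $X_j$ being $1$ makes $\tilde g_j$ a non-zero-divisor on the previous quotient $R[X_1, \ldots, X_s]/(\tilde g_1, \ldots, \tilde g_{j-1})$, which is a polynomial ring in $X_j, \ldots, X_s$ over $R[X_1, \ldots, X_{j-1}]/(\tilde g_1, \ldots, \tilde g_{j-1})$. Hence $E$ is a quotient of a regular ring by a regular sequence, proving (i).

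For (ii), assuming $k$ perfect, $L/k$ is separable, so we may choose $t_1, \ldots, t_r$ to be a \emph{separating} transcendence basis; then $L/K$ is separable algebraic, and since $K$ is infinite the primitive element theorem gives $L = K[\theta]$ for some $\theta$ with separable minimal polynomial $g \in K[X]$. Thus $(g, g') = K[X]$, and after tensoring with $R$ over $K$ we get $(g, g') = R[X]$. For any maximal ideal $\mathfrak{N}$ of $R[X]$ containing $g$ we have $g' \notin \mathfrak{N}$; if $g \in \mathfrak{N}^2 R[X]_\mathfrak{N}$, write $sg = \sum_j h_j k_j$ with $h_j, k_j \in \mathfrak{N}$ and $s \notin \mathfrak{N}$, and differentiate in $X$ to obtain $s' g + s g' \in \mathfrak{N}$, whence $s g' \in \mathfrak{N}$, contradicting the primality of $\mathfrak{N}$. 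So $g \notin \mathfrak{N}^2 R[X]_\mathfrak{N}$, i.e.\ $g$ extends to a regular system of parameters in the regular local ring $R[X]_\mathfrak{N}$, and the quotient $E_{\mathfrak{N}/(g)}$ is regular.

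The main obstacle is the initial reduction to finitely generated $L$: pinning down a clean V\'amos-type statement takes some care, but once that is available (or once one localizes at a maximal ideal and works with finitely generated subfields via flat base change), the rest of the proof is direct---monic polynomials yield the regular sequence for (i), and a separating transcendence basis together with the primitive element theorem support the derivative argument for (ii).
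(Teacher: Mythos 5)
The paper offers no argument here at all: it simply cites Tousi--Yassemi \cite[Proposition 1.5(a)]{Tousi} for (i) and Sharp \cite[Note]{Sharp2} for (ii). Your finite-type core is correct and is essentially the standard proof: writing $L=K(\theta_1,\dots,\theta_s)$ over $K=k(t_1,\dots,t_r)$, identifying $R=K\otimes_k M$ with a localization of $M[t_1,\dots,t_r]$, and cutting out $E$ by the monic lifts $\tilde g_j$ does exhibit $E$ as a regular ring modulo a regular sequence; and in the perfect case the separating transcendence basis, primitive element, and the derivative argument showing $g\notin\mathfrak{N}^2R[X]_{\mathfrak N}$ are all sound.

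The genuine gap is the reduction to finitely generated $L$, and your first proposed route is based on a false premise: noetherianity of $L\otimes_k M$ does \emph{not} force $L$ (nor even one of $L$, $M$) to be finitely generated over $k$. For instance, take $k=\mathbb{Q}$ and $L=\mathbb{Q}(\sqrt p: p\in S)$, $M=\mathbb{Q}(\sqrt q: q\in T)$ with $S,T$ disjoint infinite sets of primes; these are linearly disjoint, so $L\otimes_k M$ is a field, hence noetherian, while neither extension is finitely generated. (Vámos's theorem only covers $L\otimes_k L$.) Your second route --- passing to a finitely generated subfield $L_0$ and using the flat local map $(L_0\otimes_k M)_{\mathfrak N_0}\to (L\otimes_k M)_{\mathfrak N}$ --- does work, but as stated it is circular: ascending complete intersection or regularity along a flat local map requires control of the closed fibre, which is again a localization of a tensor product of field extensions $\kappa(\mathfrak N_0)\otimes_{L_0}L$, i.e.\ the very thing being proved. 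The missing idea is to use noetherianity of $E_{\mathfrak N}$ to choose $L_0$ large enough that $\mathfrak N_0$ generates $\mathfrak N E_{\mathfrak N}$; then the closed fibre is the \emph{field} $\kappa(\mathfrak N)$, and Avramov's ascent theorem for complete intersections along flat local maps (reference \cite{Avramov} of this paper), respectively the standard ascent of regularity along a flat local map with zero-dimensional regular closed fibre, finishes the argument. With that observation supplied, your proof is complete and gives an honest argument where the paper only cites.
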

 	
 	\begin{proof}	 
 		Proof. Cf. \cite[Proposition 1.5 (a)]{Tousi} for (i) and \cite[Note]{Sharp2} for (ii).
 	\end{proof}
 	
 	\begin{lemma}      Let $k$ be a field, $A$ and $B$ two preadic noetherian $%
 		k $-algebras  and $E$\ the ring $%
 		\ A\hat{\otimes}_{k}B$. Let $\mathfrak{M}$ be a maximal ideal of $E$, $\mathfrak{p}$ and  $\mathfrak{q}$ its inverse images in $A$ and $B$. Suppose that $E$ is noetherian. Then

 		(i) The ring $k(\mathfrak{p})\otimes _{k}k(\mathfrak{q})$ is noetherian.
 		
 		(ii) If \,  $\mathfrak{M}^{\prime }$ is the inverse image of \, $\mathfrak{M}$ in $A\otimes _{k}B$,   then

 		\begin{equation*}
 		E_{%
 			\mathfrak{M}}/(\mathfrak{p}E_{\mathfrak{M}}+\mathfrak{q}E_{\mathfrak{M}}) \cong (\kappa(\mathfrak{p})\otimes _{k}\kappa(\mathfrak{q}))_{_{\mathfrak{M%
 				}^{\prime }(\kappa(\mathfrak{p})\otimes _{k}\kappa(\mathfrak{q}))}} . 
 		\end{equation*}
 	\end{lemma}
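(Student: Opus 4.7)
\emph{The plan} is to reduce both parts to a single computation of $E/(\mathfrak{p}E+\mathfrak{q}E)$ via Lemma~2.2\,(ii)\,(a), then deduce (i) by recognizing the target as a localization and (ii) by localizing at $\mathfrak{M}$. By the remark following Lemma~2.1, $\mathfrak{p}\supseteq\mathfrak{a}$ and $\mathfrak{q}\supseteq\mathfrak{b}$, so $\mathfrak{p}$ is open in $A$ and $\mathfrak{q}$ is open in $B$. Lemma~2.2\,(ii)\,(a) then yields
\[
E/(\mathfrak{p}E+\mathfrak{q}E)\;\cong\;(A/\mathfrak{p})\,\hat{\otimes}_{k}\,(B/\mathfrak{q}).
\]
The images of $\mathfrak{a}$ in $A/\mathfrak{p}$ and of $\mathfrak{b}$ in $B/\mathfrak{q}$ are zero, so both quotients are discrete. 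Hence the tensor product topology on $(A/\mathfrak{p})\otimes_{k}(B/\mathfrak{q})$ is discrete, the completion is trivial, and the isomorphism above becomes
\[
E/(\mathfrak{p}E+\mathfrak{q}E)\;\cong\;(A/\mathfrak{p})\otimes_{k}(B/\mathfrak{q}).
\]

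For part~(i), I would observe that the left-hand side is a quotient of the noetherian ring $E$, hence noetherian, so $(A/\mathfrak{p})\otimes_{k}(B/\mathfrak{q})$ is noetherian. The ring $\kappa(\mathfrak{p})\otimes_{k}\kappa(\mathfrak{q})$ is obtained from it by inverting the image of the multiplicative set $\bigl((A/\mathfrak{p})\setminus\{0\}\bigr)\cdot\bigl((B/\mathfrak{q})\setminus\{0\}\bigr)$; as a localization of a noetherian ring it is itself noetherian. For part~(ii), I would localize the displayed isomorphism at the image $\bar{\mathfrak{M}}$ of $\mathfrak{M}$, which is maximal in $E/(\mathfrak{p}E+\mathfrak{q}E)$ and corresponds to the image $\mathfrak{M}^{\ast}$ of $\mathfrak{M}'$ in $(A/\mathfrak{p})\otimes_{k}(B/\mathfrak{q})$. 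This gives
\[
E_{\mathfrak{M}}/\bigl(\mathfrak{p}E_{\mathfrak{M}}+\mathfrak{q}E_{\mathfrak{M}}\bigr)\;\cong\;\bigl((A/\mathfrak{p})\otimes_{k}(B/\mathfrak{q})\bigr)_{\mathfrak{M}^{\ast}}.
\]
The contractions of $\mathfrak{M}^{\ast}$ to the subrings $A/\mathfrak{p}$ and $B/\mathfrak{q}$ are the zero ideals (they are the images of $\mathfrak{p}$ and $\mathfrak{q}$ respectively), so the multiplicative set above misses $\mathfrak{M}^{\ast}$. Consequently the localization at $\mathfrak{M}^{\ast}$ factors through the partial localization that produces $\kappa(\mathfrak{p})\otimes_{k}\kappa(\mathfrak{q})$, and we obtain exactly the isomorphism claimed in~(ii).

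The argument is essentially formal, so the main obstacle is not really a step but the initial observation underlying it: the collapse $(A/\mathfrak{p})\,\hat{\otimes}_{k}\,(B/\mathfrak{q})=(A/\mathfrak{p})\otimes_{k}(B/\mathfrak{q})$, which ultimately rests on the openness of $\mathfrak{p}$ and $\mathfrak{q}$ forced by the Zariski-ring property of $E$ (Lemma~2.1\,(ii)). Once this reduction is in place, both (i) and (ii) are routine manipulations of tensor products and localizations, and the extra hypothesis ``$E$ noetherian'' enters only to ensure that $E/(\mathfrak{p}E+\mathfrak{q}E)$ is noetherian in step~(i).
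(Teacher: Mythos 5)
Your argument is correct and follows essentially the same route as the paper: both reduce to the isomorphism $E/(\mathfrak{p}E+\mathfrak{q}E)\cong(A/\mathfrak{p})\otimes_{k}(B/\mathfrak{q})$ (the completion being trivial because $\mathfrak{p}\supseteq\mathfrak{a}$ and $\mathfrak{q}\supseteq\mathfrak{b}$ make the quotients discrete), then obtain (i) since $\kappa(\mathfrak{p})\otimes_{k}\kappa(\mathfrak{q})$ is a ring of fractions of this noetherian ring, and (ii) by localizing at the image of $\mathfrak{M}'$. The only cosmetic difference is that the paper passes through $C_{\mathfrak{M}'}\cong(A_{\mathfrak{p}}\otimes_{k}B_{\mathfrak{q}})_{\mathfrak{M}'(A_{\mathfrak{p}}\otimes_{k}B_{\mathfrak{q}})}$ while you invoke the disjointness of the multiplicative set from $\mathfrak{M}^{\ast}$ directly; these are the same computation.
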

 	
 	\begin{proof}  Let $\mathfrak{a}$ and $\mathfrak{b}$ be ideals of definition of $A$ and $B$.\
 		
 		(i) By Lemma 2.1 (iii), the ring $(A/\mathfrak{a})\otimes _{k}(B/%
 		\mathfrak{b})${\footnotesize \ }is noetherian. Since $\mathfrak{p}$ contains $\mathfrak{a}$ and $\mathfrak{q%
 		}$ contains $\mathfrak{b}$, the ring $(A/\mathfrak{p}%
 		)\otimes _{k}(B/\mathfrak{q})${\footnotesize \ }is noetherian. As $\kappa(%
 		\mathfrak{p})\otimes _{k}\kappa(\mathfrak{q})$ is a ring of fractions of $(A/%
 		\mathfrak{p})\otimes _{k}(B/\mathfrak{q})$ \ then it is also noetherian.
 		
 		(ii) Let $C$ denote the ring $A\otimes _{k}B$. It is clear that $\mathfrak{M}$ contains $\mathfrak{p}E+\mathfrak{q}E$ and $%
 		\mathfrak{M}^{\prime }$ contains $\mathfrak{p}C+\mathfrak{q}C$. Since  $C_{\mathfrak{M^{\prime }}}\cong(A_{\mathfrak{p}}\otimes _{k}B_{\mathfrak{q}})_{\mathfrak{M}^{\prime }(A_{\mathfrak{p}}\otimes _{k}B_{\mathfrak{q}})}$ then $(C /(\mathfrak{p}C+\mathfrak{q}C))_{%
 			\mathfrak{%
 				M}^{\prime }/(\mathfrak{p}C+\mathfrak{q}C)}\cong (\kappa(\mathfrak{p})\otimes_{k}\kappa(\mathfrak{q}))_{_{\mathfrak{M}^{\prime }(\kappa(\mathfrak{p})\otimes _{k}\kappa(\mathfrak{q}))}}$.    
 		On the other hand, since $\mathfrak{p}$ contains $\mathfrak{a}$ and $\mathfrak{q%
 		}$ contains $\mathfrak{b}$, the topology of $C/(\mathfrak{p}C+\mathfrak{q}C)$ is the discrete topology. By Lemma 2.2 (i), the canonical homomorphism $C/(\mathfrak{p}C+\mathfrak{q}C)\rightarrow E/(\mathfrak{p}E+\mathfrak{q}E)$ is an isomorphism. As $\mathfrak{%
 			M}^{\prime }/(\mathfrak{p}C+\mathfrak{q}C)$ is the inverse image of $\mathfrak{%
 			M}/(\mathfrak{p}C+\mathfrak{q}C)$ in $C/(\mathfrak{p}C+\mathfrak{q}C)$, hence the induced homomorphism \ $(C /(\mathfrak{p}C+\mathfrak{q}C))_{%
 			\mathfrak{%
 				M}^{\prime }/(\mathfrak{p}C+\mathfrak{q}C)}\rightarrow (E /(\mathfrak{p}E+\mathfrak{q}E))_{%
 			\mathfrak{%
 				M}/(\mathfrak{p}E+\mathfrak{q}E)}$ is also an isomorphism. 
 	\end{proof}

 	As in \cite{Kang}, a noetherian ring $A$ is called almost Cohen-Macaulay if 
 	 $\mbox{depth}(\mathfrak{p},A)=\mbox{depth}(\mathfrak{p}A_{\mathfrak{p}},A_{\mathfrak{p}})$
 	for every prime ideal $\mathfrak{p}$ of $A$. By \cite[(2.6)]{Kang}, $A$ is almost
 	Cohen-Macaulay if and only if $A_{\mathfrak{p}}$ (resp. $A_{\mathfrak{m}}$)
 	is almost Cohen-Macaulay for every prime ideal $\mathfrak{p}$ (resp.
 	maximal ideal $\mathfrak{m}$) of $A$.
 	
 	\begin{theorem}     Let $k$ be a field, $A$ and 
 		$B$ two preadic noetherian $k$-algebras and $E$\ the ring$\ A\hat{\otimes}%
 		_{k}B$. Suppose that $E$ is noetherian. Then\textbf{\ }
 		\begin{itemize}

 		\item[(i)] $E$ is complete intersection (resp. Gorenstein, resp. Cohen-Macaulay) if
 		and only if, for every open maximal ideal $\mathfrak{m}$\ of A and every open maximal ideal $%
 		\mathfrak{n}$\ of $B$ the rings $A_{\mathfrak{m}}$ and $B_{\mathfrak{n}}$
 		are complete intersection (resp. Gorenstein, resp. Cohen-Macaulay).
 		
 		\item[(ii)] If $E$ is regular then for every open maximal ideal $\mathfrak{m}$\ of $A$ and
 		every open maximal ideal $\mathfrak{n}$\ of $B$ the rings $A_{\mathfrak{m}}$ and $B_{%
 			\mathfrak{n}}$ are regular. This condition is sufficient if $k$ is perfect.
 		
 		\item[(iii)] $E$ is almost Cohen-Macaulay if and only if, $A_{\mathfrak{m}}$ is
 		Cohen-Macaulay for every open maximal ideal $\mathfrak{m}$ of A and $B_{\mathfrak{n}%
 		} $ is almost Cohen-Macaulay for every open maximal ideal $\mathfrak{n}$\ of $B$,
 		or, $B_{\mathfrak{n}}$ is Cohen-Macaulay for every open maximal ideal $\mathfrak{n}$
 		of B and $A_{\mathfrak{m}}$ is almost Cohen-Macaulay for every open maximal ideal $%
 		\mathfrak{m}$ of $A$.
 		\end{itemize}
 	\end{theorem}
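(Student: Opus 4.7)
The plan is to run all three parts through a common framework. For a maximal ideal $\mathfrak{M}$ of $E$ with inverse images $\mathfrak{p}$ in $A$ and $\mathfrak{q}$ in $B$, Theorem 3.1 yields that $A_{\mathfrak{p}} \to E_{\mathfrak{M}}$ and $B_{\mathfrak{q}} \to E_{\mathfrak{M}}/\mathfrak{p}E_{\mathfrak{M}}$ are flat local homomorphisms, and by Lemma 4.2 (ii) their composite ``innermost'' closed fiber $E_{\mathfrak{M}}/(\mathfrak{p}E_{\mathfrak{M}}+\mathfrak{q}E_{\mathfrak{M}})$ is a localization of $\kappa(\mathfrak{p})\otimes_{k}\kappa(\mathfrak{q})$; Lemma 4.1 then says this fiber is complete intersection in general, and regular when $k$ is perfect.

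For the necessity halves of (i) and (ii), apply Corollary 3.3 to each of the local properties complete intersection, Gorenstein, Cohen--Macaulay, regular (all of which descend by faithfully flat maps) to conclude $A_{\mathfrak{m}}$ and $B_{\mathfrak{n}}$ satisfy the property at every open maximal $\mathfrak{m}$ of $A$ and every open maximal $\mathfrak{n}$ of $B$. For sufficiency, fix $\mathfrak{M}$ with traces $\mathfrak{p}, \mathfrak{q}$: by Lemma 2.1 (ii), each of $\mathfrak{p}, \mathfrak{q}$ sits inside some open maximal, so by localization $A_{\mathfrak{p}}$ and $B_{\mathfrak{q}}$ both inherit the property. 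Now invoke the standard ascent theorem for flat local homomorphisms twice (as in \cite{Avramov} and \cite{Matsumura}): first $E_{\mathfrak{M}}/\mathfrak{p}E_{\mathfrak{M}}$ is CI / Gor / CM since $B_{\mathfrak{q}}$ and the innermost fiber are, then $E_{\mathfrak{M}}$ is CI / Gor / CM since $A_{\mathfrak{p}}$ and $E_{\mathfrak{M}}/\mathfrak{p}E_{\mathfrak{M}}$ are. Part (ii) is identical with regularity; the sufficiency direction requires $k$ perfect to control the innermost fiber.

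For (iii), introduce the Cohen--Macaulay defect $\delta(R) := \dim R - \operatorname{depth} R$, which is additive along flat local homomorphisms. Since the innermost fiber is CI, hence CM, the two-step factorization gives $\delta(E_{\mathfrak{M}}) = \delta(A_{\mathfrak{p}}) + \delta(B_{\mathfrak{q}})$; thus $E_{\mathfrak{M}}$ is almost Cohen--Macaulay ($\delta \le 1$) if and only if one of $A_{\mathfrak{p}}, B_{\mathfrak{q}}$ is Cohen--Macaulay and the other is almost Cohen--Macaulay. Combining with \cite[(2.6)]{Kang} this gives the desired criterion at each maximal of $E$ in terms of open maximals of $A$ and $B$. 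To upgrade to the global dichotomy stated in (iii), use Lemma 2.2 (ii)(b): every pair $(\mathfrak{m}, \mathfrak{n})$ of open maximals admits a common $\mathfrak{M}$ lying above, so if some $A_{\mathfrak{m}_0}$ fails to be Cohen--Macaulay, the pairwise condition at $(\mathfrak{m}_0, \mathfrak{n})$ forces $B_{\mathfrak{n}}$ Cohen--Macaulay for every open maximal $\mathfrak{n}$, and symmetrically with the roles of $A$ and $B$ reversed.

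The main obstacle is part (iii): the ascent theorems for CI, Gorenstein, Cohen--Macaulay and regular are essentially off-the-shelf, but almost Cohen--Macaulay is neither preserved nor reflected in as clean a way, so we must track defects carefully through the two-step flat factorization and then extract a uniform global dichotomy from a family of pairwise conditions---and it is precisely here that the existence result Lemma 2.2 (ii)(b) becomes indispensable.
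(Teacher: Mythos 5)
Your proposal is correct and follows essentially the same route as the paper: necessity of (i)--(ii) via Corollary 3.3, sufficiency via the two-step flat factorization $A_{\mathfrak{p}}\rightarrow E_{\mathfrak{M}}$ and $B_{\mathfrak{q}}\rightarrow E_{\mathfrak{M}}/\mathfrak{p}E_{\mathfrak{M}}$ with the innermost fiber controlled by Lemmas 4.1 and 4.2, and Lemma 2.2 (ii)(b) to produce a maximal ideal of $E$ over a given pair $(\mathfrak{m},\mathfrak{n})$. The only divergence is cosmetic: where the paper invokes Lemme 3.1 of Taba\^{a}'s Math.\ Scand.\ paper as a black box for the almost Cohen--Macaulay case, you unpack its content via additivity of the Cohen--Macaulay defect along flat local maps (relying on Kang's characterization of almost CM local rings), and you make explicit the pairwise-to-global upgrade in (iii) that the paper leaves implicit.
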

 	
 	\begin{proof}	 
 		Let $\mathfrak{a}$ and $\mathfrak{b}$\ be  ideals of definition of $A$
 		and $B$, respectively.
 		
 		We prove that the conditions are sufficient. It is clear that we can
 		replace in the hypotheses \textquotedblleft maximal\textquotedblright\ by
 		\textquotedblleft prime\textquotedblright . Let $\mathfrak{M}$ be a maximal
 		ideal of $E$, $\mathfrak{p}$ its inverse image in $A$ and $\mathfrak{q}$ its
 		inverse image in $B$; $\mathfrak{p}$ contains $\mathfrak{a}$ and $\mathfrak{q%
 		}$ contains $\mathfrak{b}$. By Theorem 3.1, the local homomorphisms\ $A_{%
 			\mathfrak{p}}\rightarrow E_{\mathfrak{M}}$ and $B_{\mathfrak{q}}\rightarrow
 		E_{\mathfrak{M}}/\mathfrak{p}E_{\mathfrak{M}}$ are flat.The closed fiber of 
 		the homomorphism $B_{\mathfrak{q}}\rightarrow E_{%
 			\mathfrak{M}}/\mathfrak{p}E_{\mathfrak{M}}$ is isomorphic to $E_{\mathfrak{M}%
 		}/(\mathfrak{p}E_{\mathfrak{M}}+\mathfrak{q}E_{\mathfrak{M}})$. By Lemma 4.2
 		(ii), this fiber is isomorphic to a localization of $\kappa(\mathfrak{p})\otimes _{k}\kappa(%
 		\mathfrak{q})$ and by Lemma 4.2 (i), the ring $\kappa(\mathfrak{p})\otimes _{k}\kappa(%
 		\mathfrak{q})$ is noetherian. Hence, it follows from Lemma 4.1 that  this fiber is complete
 		intersection and if $k$ is perfect it is regular.\qquad\ 
 		
 		(i) and (ii) follow by using the homomorphism $B_{\mathfrak{q}}\rightarrow
 		E_{\mathfrak{M}}/\mathfrak{p}E_{\mathfrak{M}}$ and then the homomorphism $A_{%
 			\mathfrak{p}}\rightarrow E_{\mathfrak{M}}$.
 		
 		(iii) follows by applying \cite[Lemme 3.1]{Tabaa1} to the homomorphism $B_{\mathfrak{q%
 		}}\rightarrow E_{\mathfrak{M}}/\mathfrak{p}E_{\mathfrak{M}}$ and then to the
 		homomorphism $A_{\mathfrak{p}}\rightarrow E_{\mathfrak{M}}$.
 		
 		We prove that the conditions are necessary.
 		
 		(i) and (ii) follow from Corollary 3.3.
 		
 		(iii) Let $\mathfrak{m}$ and $\mathfrak{n}$\ be open maximal ideals of $%
 		A $ and $B$; $\mathfrak{m}$ contains $\mathfrak{a}$ and $\mathfrak{n}$
 		contains $\mathfrak{b}$. By Lemma 2.2 (ii) (b), there exists a maximal ideal $%
 		\mathfrak{Q}$ of $E$ such that $\mathfrak{m}$ is the inverse image of $%
 		\mathfrak{Q}$ in $A$ and $\mathfrak{n}$ is the inverse image of $\mathfrak{Q}
 		$ in $B$. By \cite[Lemme 3.1]{Tabaa1}, applied to the homomorphism $A_{\mathfrak{m}%
 		}\rightarrow E_{\mathfrak{Q}}$ and then to the homomorphism $B_{\mathfrak{n}%
 		}\rightarrow E_{\mathfrak{Q}}/\mathfrak{m}E_{\mathfrak{Q}}$, one of the
 		rings, $A_{\mathfrak{m}}$ or $B_{\mathfrak{n}}$, is Cohen-Macaulay and the
 		other is almost Cohen-Macaulay.	
 		\end{proof}
 	
 	\begin{lemma}  
 		Let $k$ be a field and $A$ a complete
 		noetherian local $k$-algebra. Then the structural homomorphism $k\rightarrow
 		A$ has a factorization $k\rightarrow C\stackrel{u}{\rightarrow} A$, where $C$ is a regular
 		local ring and the homomorphism $u$ is local surjective.
 	\end{lemma}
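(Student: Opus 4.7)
My plan is to deduce this from the Cohen structure theorem. Denote the maximal ideal of $A$ by $\mathfrak{m}$ and its residue field by $K = A/\mathfrak{m}$, and identify $k$ with its image in $A$ under the structural homomorphism. Since $A$ contains $k$ it is equicharacteristic, and by the refined Cohen structure theorem for complete equicharacteristic local rings containing a prescribed subfield (as in EGA $0_{\mathrm{IV}}$, 19.8.6), $A$ admits a coefficient field $K_{0}$ with $k \subset K_{0} \subset A$, mapping isomorphically onto $K$ under the residue map.

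Having fixed such a $K_{0}$, I would choose generators $x_{1}, \dots, x_{n}$ of $\mathfrak{m}$ (possible since $A$ is noetherian) and set $C := K_{0}[[T_{1}, \dots, T_{n}]]$, a regular local ring of dimension $n$. Endow $C$ with the $k$-algebra structure coming from $k \hookrightarrow K_{0} \hookrightarrow C$, and define $u : C \to A$ by letting $u|_{K_{0}}$ be the inclusion $K_{0} \hookrightarrow A$ and $u(T_{i}) = x_{i}$. Since $x_{i} \in \mathfrak{m}$ and $A$ is $\mathfrak{m}$-adically complete, $u$ extends uniquely from $K_{0}[T_{1}, \dots, T_{n}]$ to the power series ring, and it is manifestly a local homomorphism sending the maximal ideal $(T_{1}, \dots, T_{n})$ into $\mathfrak{m}$. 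The commutativity of $k \hookrightarrow K_{0} \hookrightarrow C \xrightarrow{u} A$ with the given structural map is then automatic, because $K_{0} \supset k$ as subfields of $A$ and $u|_{K_{0}}$ is precisely that inclusion.

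For surjectivity I would run a successive-approximation argument: given $a \in A$, write $a = a_{0} + r_{1}$ with $a_{0} \in K_{0}$ representing $\bar{a} \in K$ and $r_{1} \in \mathfrak{m}$; expand $r_{1}$ as a $K_{0}$-linear combination of the $x_{i}$ modulo $\mathfrak{m}^{2}$, iterate this over increasing powers of $\mathfrak{m}$, and assemble the resulting Cauchy data into a preimage in $C$ via $\mathfrak{m}$-adic completeness of $A$. The main obstacle I anticipate is actually the very first step, namely the existence of a coefficient field $K_{0}$ containing the prescribed subfield $k$; this is the substantive content of Cohen's theorem in its refined form, and is the point where one must be careful about (in)separability of $K/k$ in positive characteristic. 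Once $K_{0} \supset k$ is secured, the remainder of the argument is routine bookkeeping on formal power series.
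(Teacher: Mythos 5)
Your argument breaks down at exactly the step you flag as the main obstacle: a coefficient field $K_{0}$ of $A$ containing (the image of) $k$ need not exist. The refined Cohen theorem you invoke (EGA $0_{\mathrm{IV}}$, 19.8.6) produces a coefficient field containing a prescribed subfield $k$ only under the hypothesis that the residue field $K$ is formally smooth (equivalently, separable) over $k$, and the lemma assumes nothing of the sort. A concrete counterexample: take $k=\mathbb{F}_{p}(t)$ and $A=k[y]/\bigl((y^{p}-t)^{2}\bigr)$, a complete (Artinian) noetherian local $k$-algebra with residue field $K=k(t^{1/p})$. A $k$-algebra section of $A\to K$ would require an element $z=\sum a_{i}y^{i}\in A$ with $z^{p}=t$; writing $\varepsilon=y^{p}-t$ (so $\varepsilon^{2}=0$) one computes $z^{p}=\sum a_{i}^{p}t^{i}+\varepsilon\sum i\,a_{i}^{p}t^{i-1}$, and since the $a_{i}^{p}$ lie in $\mathbb{F}_{p}(t^{p})$ the two required identities $\sum a_{i}^{p}t^{i}=t$ and $\sum i\,a_{i}^{p}t^{i-1}=0$ contradict each other (apply $d/dt$ to the first). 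So no such $z$, hence no such $K_{0}$, exists. The rest of your construction (power series over $K_{0}$, successive approximation for surjectivity) is routine and correct, but it never gets off the ground in the inseparable case.

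The paper avoids this by decoupling the two issues. It first applies the absolute Cohen structure theorem to write $A\cong C/\mathfrak{J}$ with $C$ a complete regular local ring, demanding no compatibility with $k$ at that stage, and only then lifts the structural map $k\to C/\mathfrak{J}$ to a homomorphism $\sigma:k\to C$, using that $k$ is formally smooth over its prime field $\mathbf{P}$ (automatic, since prime fields are perfect) together with the infinitesimal lifting property along the $\mathfrak{J}$-adically complete surjection $C\to C/\mathfrak{J}$ (EGA $0_{\mathrm{IV}}$, 19.3.10). The resulting copy $\sigma(k)\subset C$ is in general a ``tilted'' copy of $k$ not contained in any coefficient field of $C$, and that is precisely what makes the statement true without any separability hypothesis. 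To repair your proof you would need to replace ``coefficient field containing $k$'' by this formal-smoothness lifting over the prime field.
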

 	
 	\begin{proof}  Let \textbf{P} be the prime subfield of $k$ and $K$ the
 		residue field of $A$. By Cohen Theorem \cite[(19.8.8)]{Grothendieck1}, there exists a
 		complete regular local ring $C$ containing $K$ such that $A$ is isomorphic
 		to $C/\mathfrak{J}$, where $\mathfrak{J}$ is an ideal of $C$. The \textbf{P}%
 		-algebra $C$ is separated and complete for the $\mathfrak{J}$-preadic
 		topology and, by \cite[(19.6.1)]{Grothendieck1} the extension $k$ of \textbf{P} is \textbf{P}%
 		-formally smooth. Hence, it follows from \cite[(19.3.10)]{Grothendieck1} that the \textbf{P}-homomorphism $%
 		k\rightarrow C/\mathfrak{J}$ has a factorization $k \stackrel{\sigma}{\rightarrow }
 		C \rightarrow C/\mathfrak{J}$, where $\sigma $ is a \textbf{P}%
 		-homomorphism.
 	\end{proof}
 	
 	The following result generalizes \cite[Corollary 2.12]{Shaul}.
 	
 	\begin{corollary}    Let $k$ be a field, $A$ and $B$ two
 		noetherian local $k$-algebras. If the ring $A\hat{\otimes}_{k}B$ is
 		noetherian (resp. noetherian of finite dimension), then it is a quotient of
 		a complete intersection (resp. complete intersection of finite dimension)
 		ring and if moreover $k$ is perfect then it is a quotient of a regular
 		(resp. regular of finite dimension) ring.
 		
 	\end{corollary}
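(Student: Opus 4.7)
The plan is to construct an ambient ring $E'$ that resolves $E := A\hat\otimes_k B$ (in the sense of being a complete intersection or regular parent of which $E$ is a quotient) by resolving $A$ and $B$ separately via Lemma 4.4 and then taking the completed tensor product of the resulting regular covers. Since $A\hat\otimes_k B \cong \hat A\hat\otimes_k \hat B$, I may assume at the outset that $A$ and $B$ are complete noetherian local $k$-algebras. Applying Lemma 4.4 to each yields complete regular local $k$-algebras $C_1, C_2$ together with local surjections $u_1 : C_1 \to A$, $u_2 : C_2 \to B$. Set $I = \ker u_1$, $J = \ker u_2$, and $E' = C_1\hat\otimes_k C_2$.

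I would first verify that $E'$ is noetherian. Taking the maximal ideals $\mathfrak{m}_{C_1}, \mathfrak{m}_{C_2}$ as ideals of definition, Lemma 2.1(iii) reduces this to the noetherianity of $\kappa(\mathfrak{m}_{C_1})\otimes_k \kappa(\mathfrak{m}_{C_2})$. Since $u_1$ and $u_2$ are local surjections, these residue fields coincide with $\kappa(\mathfrak{m}_A)$ and $\kappa(\mathfrak{m}_B)$ respectively; and Lemma 2.1(iii) applied in the other direction to the hypothesis that $E$ is noetherian (with ideals of definition $\mathfrak{m}_A, \mathfrak{m}_B$) delivers exactly the required noetherianity.

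Next I would identify $E$ as a quotient of $E'$. By Lemma 2.2(ii)(a) applied to the ideals $I$ and $J$,
$$E'/(IE' + JE') \;\cong\; (C_1/I)\hat\otimes_k (C_2/J) \;\cong\; A\hat\otimes_k B \;=\; E.$$
Theorem 4.3 applied to the pair $(C_1, C_2)$ and $E'$ then does the heavy lifting: $C_1$ and $C_2$ are local regular rings, hence complete intersection at their unique (open) maximal ideal, so Theorem 4.3(i) yields that $E'$ is complete intersection, and Theorem 4.3(ii) yields that $E'$ is regular when $k$ is perfect. For the finite dimensional version, Corollary 3.2(i) applied to $E$ shows that $\dim E < \infty$ forces $\inf(tr.\deg_k \kappa(\mathfrak{m}_A), tr.\deg_k \kappa(\mathfrak{m}_B)) < \infty$; since $C_i$ shares its residue field with $A$ (resp.\ $B$) and $\dim C_i < \infty$, the same Corollary 3.2(i) applied to $E'$ gives $\dim E' < \infty$, as desired.

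The main obstacle I anticipate is the check that $E'$ remains noetherian under the passage from $(A,B)$ to $(C_1, C_2)$ --- this rests on the fact that the residue field is unchanged by Cohen's surjection, which is precisely what makes the ``lift-then-tensor'' strategy transfer the noetherian hypothesis from $E$ to $E'$. Once this invariance is in place, the statement follows directly from the structural machinery already established in Sections 2--4.
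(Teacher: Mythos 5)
Your proposal is correct and follows essentially the same route as the paper: reduce to the complete case, apply Lemma 4.4 to obtain regular local covers $C_1,C_2$, transfer noetherianity via Lemma 2.1(iii) using the invariance of residue fields, and conclude with Theorem 4.3 and Corollary 3.2. The only (immaterial) divergence is in exhibiting $E$ as a quotient of $C_1\hat\otimes_k C_2$: you invoke Lemma 2.2(ii)(a) applied to $\ker u_1$ and $\ker u_2$, whereas the paper observes directly that $u_1\otimes u_2$ is surjective and strict for the tensor product topologies, so its completion remains surjective; both are valid.
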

 	
 	\begin{proof}  We may assume that $A$ and $B$ are complete. By the previous lemma,
 		the structural homomorphism $k\rightarrow A$ (resp. $k\rightarrow B$ )
 		has a factorization $k\rightarrow C \stackrel{u}{\rightarrow}A$ (resp. $k\rightarrow
 		D\stackrel{v}{\rightarrow}B$\ ), where $C$ (resp. $D$) is a regular local ring and
 		the homomorphism $u$ (resp. $v$) is local surjective. The homomorphism $%
 		u\otimes v:C\otimes _{k}D\rightarrow A\otimes _{k}B$ is surjective and, by
 		the definition of the tensor product topology, it is strict. So, the
 		homomorphism $u\hat{\otimes}v:C\hat{\otimes}_{k}D\rightarrow A\hat{\otimes}%
 		_{k}B$ induced by $u\otimes v$ is surjective. On the other hand, the ring $A%
 		\hat{\otimes}_{k}B$ is noetherian, so, by Lemma 2.1 (iii), the ring $C\hat{%
 			\otimes}_{k}D$ is also noetherian and since $C$ and $D$ are regular, it
 		follows from Theorem 4.3 that $C\hat{\otimes}_{k}D$ is complete intersection
 		and that if $k$ is perfect it is regular. If $A\hat{\otimes}_{k}B$ is of
 		finite dimension it follows from Corollary 3.2 (ii) that $C\hat{\otimes}%
 		_{k}D $ is also of finite dimension.
 	\end{proof} 
 	
 	We deduce that if $A\hat{\otimes}_{k}B$ is noetherian of finite dimension
 	then it possesses a dualizing complex.
 	
 	\section{ Reduced, Normal Rings and Domains }
 	
 	Recall that a homomorphism $\sigma :A\rightarrow S$ of noetherian rings is
 	reduced (resp. normal) if it is flat and, if for every prime ideal $%
 	\mathfrak{p}$ of $A$ the fiber $S\otimes _{A}k(\mathfrak{p})$ of $\sigma $\
 	is geometrically reduced (resp. geometrically normal) over $k(\mathfrak{p})$.
 	
 	Unlike the previous cases, the reduced case and the normal case essentially use
 	the following localization theorem.
 	
 	\textbf{\ }
 	
 	\begin{theorem}[\cite{Nishimura}]  Let $\sigma :A\rightarrow S$ be a flat
 		local homomorphism of noetherian local rings and let $\mathfrak{m}$ be the maximal
 		ideal of $A$. Suppose that the formal fibers of $A$ are geometrically reduced
 		(resp. geometrically normal). If the closed fiber $S/\mathfrak{m}S$ is
 		geometrically reduced (resp. geometrically normal) over $A/\mathfrak{m}$,
 		then $\sigma $\ is reduced (resp. normal).
 	\end{theorem}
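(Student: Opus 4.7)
The plan is to prove the reduced statement in detail; the normal statement follows by the same scheme with ``reduced'' systematically replaced by ``normal''. By definition, it suffices to show that for every prime $\mathfrak{p}$ of $A$ the fiber $S\otimes_A\kappa(\mathfrak{p})$ is geometrically reduced over $\kappa(\mathfrak{p})$. First I would pass to the completion of the base: the homomorphism $S\to\hat{A}\otimes_A S$ is the base change of the faithfully flat map $A\to\hat{A}$, hence is itself faithfully flat, and geometric reducedness of a fiber descends along faithfully flat base change. It therefore suffices to prove that each fiber of the composition $A\to\hat{A}\to\hat{A}\otimes_A S$ is geometrically reduced over the corresponding residue field of $A$.

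Next I would invoke transitivity. By hypothesis, the fibers of $A\to\hat{A}$ (the formal fibers of $A$) are geometrically reduced. Using the transitivity theorem for geometric reducedness under flat composition (due to Andr\'e), the problem reduces to verifying that $\hat{A}\to\hat{A}\otimes_A S$ has geometrically reduced fibers at every prime of $\hat{A}$. Its closed fiber is, by right exactness of tensor product and the identification $\hat{A}/\mathfrak{m}\hat{A}=A/\mathfrak{m}$, precisely $S/\mathfrak{m}S$, which is geometrically reduced over $A/\mathfrak{m}$ by assumption. So the problem is reduced to a purely complete-base question.

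The crux, and the main obstacle, is then the following assertion: \emph{a flat local homomorphism $\hat{A}\to T$ from a complete noetherian local ring with geometrically reduced closed fiber has geometrically reduced fibers at every prime of $\hat{A}$.} For this I would use Cohen's structure theorem to present $\hat{A}=R/J$, where $R$ is a complete regular (hence excellent) local ring, and then study the locus $U\subset\operatorname{Spec}(\hat{A})$ of primes $\mathfrak{P}$ for which $T\otimes_{\hat{A}}\kappa(\mathfrak{P})$ is geometrically reduced. Openness of $U$ follows from a Matsumura-style openness criterion for the $R_0$-over-a-field condition along a flat family; $U$ contains the closed point by hypothesis; finally, its stability under generization is forced by flatness of $T$ over $\hat{A}$ combined with the geometrically regular formal fibers of the excellent ring $R$. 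Together these force $U=\operatorname{Spec}(\hat{A})$. The normal case is handled identically, substituting openness of the normal locus for openness of the reduced locus and using the Serre criterion $R_1+S_2$ to propagate normality from the closed fiber to all fibers.
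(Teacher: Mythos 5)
First, a remark on the comparison you asked for: the paper does not prove this statement at all. It is Theorem 5.1, quoted with attribution to Nishimura's 1981 paper, and it is an instance of Grothendieck's localization problem (see also the reference to Murayama in Remarks 5.9). So there is no internal proof to match your argument against; you are attempting to reprove a deep external theorem. Your outer reductions are reasonable in spirit: geometric reducedness of fibers does descend along a faithfully flat base change, and the transitivity argument through the formal fibers of $A$ is the standard first move. However, the ring $\hat{A}\otimes_A S$ you base-change to is in general neither local nor noetherian, so your ``crux'' statement about a flat \emph{local} homomorphism $\hat{A}\to T$ of \emph{noetherian} local rings does not literally apply to it; the usual reduction instead passes through the completion of the target, factoring $A\to S\to \hat{S}$ as $A\to\hat{A}\to\hat{S}$ and descending along the faithfully flat map $S\to\hat{S}$. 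That defect is repairable.

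The genuine gap is in the crux itself. On $\mathrm{Spec}$ of a local ring, \emph{every} open subset containing the closed point is the whole space (the closed point lies in the closure of every point), so the assertion ``the locus $U$ of primes with geometrically reduced fiber is open and contains the closed point'' is not a strategy for proving the theorem --- it \emph{is} the theorem. The openness you invoke (``a Matsumura-style openness criterion for the $R_0$-over-a-field condition along a flat family'') is not available here: all such openness results (EGA~IV~12.1.x, or Matsumura's treatment) require the morphism to be of finite type, or require exactly the kind of chain of Nagata/excellence conditions whose verification in this generality is the content of Nishimura's theorem. The same applies to ``stability under generization is forced by flatness'': propagating a property of the closed fiber to the generic and intermediate fibers of an arbitrary flat local homomorphism is precisely the localization problem, and flatness alone does not force it. (Your crux statement is in fact \emph{true}, because a complete local ring is excellent and so satisfies the formal-fibers hypothesis, but its proof is where all the work lies: Nishimura's argument proceeds by induction on $\dim A$ using Cohen factorizations, normalization, and a careful analysis of reducedness and normality under completion, none of which appears in your sketch.) As written, the proposal assumes the theorem at its decisive step.
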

 	
 	We deduce the following corollary.
 	
 	\textbf{\ }
 	
 	\begin{corollary}   Let $\sigma :A\rightarrow S$ be a flat
 		homomorphism of noetherian rings. Assume that $A$ is complete local with
 		maximal ideal $\mathfrak{m}$ and that $\mathfrak{m}S$ is contained in the
 		radical of $S$. If the closed fiber $S/\mathfrak{m}S$ is geometrically
 		reduced (resp. geometrically normal) over $A/\mathfrak{m}$, then $S$ is
 		reduced (resp. normal) if $A$ is reduced (resp. normal).
 	\end{corollary}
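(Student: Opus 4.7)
The plan is to localize at an arbitrary maximal ideal of $S$ in order to reduce the statement to Theorem 5.1 (Nishimura), and then to ascend reducedness (resp.\ normality) along the flat local homomorphism that one obtains.

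First I would fix a maximal ideal $\mathfrak{M}$ of $S$. Since $\mathfrak{m}S$ is contained in the radical of $S$, we have $\mathfrak{m}S\subseteq\mathfrak{M}$, so $\sigma^{-1}(\mathfrak{M})\supseteq\mathfrak{m}$; as $\mathfrak{m}$ is maximal in $A$, equality holds, and $\sigma$ induces a flat local homomorphism $A\rightarrow S_{\mathfrak{M}}$ of noetherian local rings. Its closed fiber is $(S/\mathfrak{m}S)_{\mathfrak{M}/\mathfrak{m}S}$, a localization of the closed fiber of $\sigma$; since geometric reducedness (resp.\ geometric normality) is preserved under localization, this closed fiber is still geometrically reduced (resp.\ geometrically normal) over $A/\mathfrak{m}$.

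Next, because $A$ is complete, the canonical map $A\rightarrow\widehat{A}$ is the identity, so the formal fiber of $A$ at a prime $\mathfrak{p}$ is just $\kappa(\mathfrak{p})$, which is trivially geometrically reduced and geometrically normal. Theorem 5.1 therefore applies and yields that $A\rightarrow S_{\mathfrak{M}}$ is itself a reduced (resp.\ normal) homomorphism. Combined with the assumption that $A$ is reduced (resp.\ normal), the standard ascent result for reduced/normal flat homomorphisms (cf.\ \cite{Matsumura}, Theorem 23.9 and the discussion of $R_{i}$, $S_{i}$ under flat base change) gives that $S_{\mathfrak{M}}$ is reduced (resp.\ normal). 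Since reducedness and normality are local properties on $\mathrm{Spec}(S)$, letting $\mathfrak{M}$ range over the maximal ideals of $S$ yields the conclusion for $S$.

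The only delicate point, and the step where the hypotheses must be used sharply, is the reduction to a local homomorphism: the assumption that $\mathfrak{m}S$ lies in the radical of $S$ is exactly what forces $\sigma^{-1}(\mathfrak{M})=\mathfrak{m}$ for every maximal ideal $\mathfrak{M}$ of $S$, so that the closed fiber of the induced local map $A\rightarrow S_{\mathfrak{M}}$ really is a localization of the globally given closed fiber $S/\mathfrak{m}S$. Once this is in place, the completeness of $A$ trivializes its formal fibers and Nishimura's theorem does the rest, so the argument is essentially a clean assembly rather than an obstacle-laden computation.
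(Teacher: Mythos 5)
Your proof is correct and follows the paper's argument essentially verbatim: localize $S$ at an arbitrary maximal ideal, use the radical hypothesis to see the induced map $A\rightarrow S_{\mathfrak{M}}$ is local with closed fiber a localization of $S/\mathfrak{m}S$, invoke Theorem 5.1 (the completeness of $A$ trivializing its formal fibers), and ascend reducedness (resp.\ normality) along the resulting reduced (resp.\ normal) homomorphism. The extra details you supply on the triviality of the formal fibers and on the ascent step are points the paper leaves implicit.
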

 	
 	\begin{proof}  Let $\mathfrak{Q}$ be a maximal ideal of $S$. We have $\sigma ^{-1}(%
 		\mathfrak{Q})=\mathfrak{m}$, so the homomorphism $\widetilde{\sigma }%
 		:A\rightarrow S_{\mathfrak{Q}}$ induced by $\sigma $\ is local and flat. Its
 		closed fiber is $(S/\mathfrak{m}S)_{\mathfrak{Q}/\mathfrak{m}S}$. It is
 		geometrically reduced (resp. geometrically normal) over $A/\mathfrak{m}$. By
 		the above theorem, the homomorphism $\widetilde{\sigma }$ is reduced (resp.
 		normal) and since $A$ is reduced (resp. normal) $S_{\mathfrak{Q}}$ is also
 		reduced (resp. normal).
 	\end{proof}

 	\begin{lemma} 
 		Let $k$ be a ring, $A$ and $B$ two semi-local $k$-algebras, $(\mathfrak{m}_{r})_{r}$ and $(\mathfrak{n}_{s})_{s}$  the distinct maximal ideals of $A$ and $B$. Then 
 		\begin{equation*}
 		A\hat{\otimes}_{k}B \cong \prod\limits_{r,s}(\widehat{A}_{\mathfrak{m}_{r}} \hat{\otimes}_{k}\widehat{B}_{\mathfrak{n}_{s}}). 
 		\end{equation*}
 	\end{lemma}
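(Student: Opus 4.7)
The plan is to reduce the statement to two standard facts: that the completion of a semi-local preadic ring decomposes as a finite product of its local completions, and that the completed tensor product distributes over finite products. First I would replace $A$ and $B$ by their completions, which is harmless since $A\hat{\otimes}_k B\cong \widehat{A}\hat{\otimes}_k\widehat{B}$. Then, using that the Jacobson radical $\bigcap_r \mathfrak{m}_r$ is an ideal of definition for $A$ (modulo which $A$ becomes semi-local in the classical sense), the Chinese Remainder Theorem applied termwise to the inverse system $(A/\mathfrak{a}^n)$ yields the topological ring isomorphism
\begin{equation*}
\widehat{A}\cong \prod_r \widehat{A}_{\mathfrak{m}_r},\qquad \widehat{B}\cong \prod_s \widehat{B}_{\mathfrak{n}_s},
\end{equation*}
each factor being given its maximal-adic topology and the right-hand side the product topology.

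Next, since the indexing sets are finite, the ordinary $k$-tensor product distributes:
\begin{equation*}
\widehat{A}\otimes_k \widehat{B}\;\cong\; \prod_{r,s}\bigl(\widehat{A}_{\mathfrak{m}_r}\otimes_k \widehat{B}_{\mathfrak{n}_s}\bigr).
\end{equation*}
I would then verify that this algebraic isomorphism is also a homeomorphism for the tensor product topologies: writing $\mathfrak{a}$ and $\mathfrak{b}$ for ideals of definition of $\widehat{A}$ and $\widehat{B}$, they decompose under the product isomorphism as $\mathfrak{a}=\prod_r \mathfrak{a}_r$ and $\mathfrak{b}=\prod_s \mathfrak{b}_s$, where $\mathfrak{a}_r$ (resp.\ $\mathfrak{b}_s$) is an ideal of definition of $\widehat{A}_{\mathfrak{m}_r}$ (resp.\ $\widehat{B}_{\mathfrak{n}_s}$). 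Consequently the neighborhood basis $\mathfrak{a}^i(\widehat{A}\otimes_k\widehat{B})+\mathfrak{b}^j(\widehat{A}\otimes_k\widehat{B})$ of $0$ transports to the product of the corresponding ideals in each factor $\widehat{A}_{\mathfrak{m}_r}\otimes_k\widehat{B}_{\mathfrak{n}_s}$, i.e.\ to the product of tensor product topologies.

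Finally, since the completion of a finite product of topological rings (with the product topology) is the product of their completions, passage to completion on both sides yields
\begin{equation*}
A\hat{\otimes}_k B\;\cong\; \widehat{A}\hat{\otimes}_k\widehat{B}\;\cong\; \prod_{r,s}\bigl(\widehat{A}_{\mathfrak{m}_r}\hat{\otimes}_k \widehat{B}_{\mathfrak{n}_s}\bigr),
\end{equation*}
which is the claimed isomorphism. None of the steps is deep; the only point requiring care is the topological bookkeeping in the third step, namely checking that the decomposition of $\widehat{A}$ identifies the ideal-of-definition topology with the product topology on $\prod_r \widehat{A}_{\mathfrak{m}_r}$. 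Once this is granted, everything else is formal distributivity together with the commutation of completion with finite products.
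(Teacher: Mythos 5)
Your proposal is correct and follows essentially the same route as the paper, which simply says the proof is ``similar to that of \cite[Theorem 8.15]{Matsumura}'' --- i.e.\ the Chinese Remainder decomposition $\widehat{A}\cong\prod_r\widehat{A}_{\mathfrak{m}_r}$ obtained from $A/\mathfrak{a}^n\cong\prod_r A/\mathfrak{m}_r^n$, combined with distributing the tensor product over finite products and commuting completion with finite products. The topological bookkeeping you flag is exactly the point one must check, and your handling of it is sound.
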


 	\begin{proof} 
 		It is similar to that of \cite[Theorem 8.15]{Matsumura}.
 	\end{proof}
 	
 	In all the rest $E$ denotes the ring $A\hat{\otimes}_{k}B$.
 	
 	\begin{theorem}      Let $k$ be a perfect field, $A$ and $B$ two
 		noetherian semi-local $k$-algebras such that the ring $A\hat{\otimes}_{k}B$
 		is noetherian. Then $A\hat{\otimes}_{k}B$ is reduced (resp. normal) if and
 		only if $A$ and $B$ are analytically reduced (resp. normal).
 	\end{theorem}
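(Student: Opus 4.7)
The plan is to split the equivalence into its easy and hard directions. Necessity is immediate from Corollary 3.3: since ``reduced'' and ``normal'' are local properties of noetherian local rings that descend by faithful flatness, the implication (i) $\Rightarrow$ (ii) applied symmetrically to $A$ and $B$ shows that $\widehat A$ and $\widehat B$ are reduced (resp. normal), which is precisely analytic reducedness (resp. normality).

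For sufficiency, I would first use Lemma 5.3 to write $E\cong \prod_{r,s}\widehat A_{\mathfrak m_r}\hat\otimes_k\widehat B_{\mathfrak n_s}$ and reduce to the case where $A$ and $B$ are complete noetherian local $k$-algebras, reduced (resp. normal) by hypothesis; let $\kappa_A,\kappa_B$ be their residue fields. Fix a maximal ideal $\mathfrak M$ of $E$ (necessarily lying over $\mathfrak m$ in $A$ and $\mathfrak n$ in $B$). By Theorem 3.1(i), $A\to E_{\mathfrak M}$ is flat; my aim is to conclude via Corollary 5.2 that $E_{\mathfrak M}$ is reduced (resp. normal). All hypotheses are immediate except the main technical point, that the closed fiber $E_{\mathfrak M}/\mathfrak m E_{\mathfrak M}$, which by Lemma 2.2(ii)(a) is a localization of $\kappa_A\hat\otimes_k B$, is \emph{geometrically} reduced (resp. normal) over $\kappa_A$.

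To establish this, given a finite extension $L/\kappa_A$, I would use that $L$ is finite free over $\kappa_A$, so $L\otimes_{\kappa_A}-$ commutes with the inverse limit defining the completion and delivers the crucial identity
$$L\otimes_{\kappa_A}(\kappa_A\hat\otimes_k B)\;\cong\; L\hat\otimes_k B.$$
This ring is noetherian, because $L\otimes_k\kappa_B$ is finite over $\kappa_A\otimes_k\kappa_B$, which is noetherian by Lemma 2.1(iii) applied to $E$. To see $L\hat\otimes_k B$ is itself reduced (resp. normal), I apply Corollary 5.2 a second time to each flat local homomorphism $B\to(L\hat\otimes_k B)_{\mathfrak N}$ obtained from Theorem 3.1(i): the closed fiber is a localization of $L\hat\otimes_k\kappa_B=L\otimes_k\kappa_B$, and tensoring this with any finite extension $L'/\kappa_B$ yields $L\otimes_k L'$, which is noetherian (finite over $\kappa_A\otimes_k\kappa_B$) and regular by Lemma 4.1(ii), since $k$ is perfect. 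So the closed fiber is geometrically regular (hence geometrically normal, hence geometrically reduced) over $\kappa_B$, and Corollary 5.2 together with the hypothesis on $B$ gives $(L\hat\otimes_k B)_{\mathfrak N}$ reduced (resp. normal); therefore $L\hat\otimes_k B$ is, and so is its localization $L\otimes_{\kappa_A}(\kappa_A\hat\otimes_k B)_{\mathfrak M'}$. This verifies the geometric condition on the closed fiber of $A\to E_{\mathfrak M}$, and Corollary 5.2 finishes the proof.

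The main obstacle is this nested structure: one must descend through two Nishimura-type applications of Corollary 5.2, recognise the identity $L\otimes_{\kappa_A}(\kappa_A\hat\otimes_k B)\cong L\hat\otimes_k B$ that converts a geometric condition at the outer stage into a plain reducedness/normality question at the inner stage, and only at the innermost step invoke Lemma 4.1, which is where the hypothesis that $k$ is perfect is actually used.
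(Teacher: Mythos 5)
Your proposal is correct and follows essentially the same route as the paper: necessity via Corollary 3.3, reduction to the complete local case via Lemma 5.3, and then a double application of Corollary 5.2 (Nishimura's localization theorem) in which the inner step uses the hypothesis on $B$ and Lemma 4.1(ii) is where perfectness of $k$ enters. The only cosmetic difference is that you first identify $L\otimes_{A/\mathfrak m}(E/\mathfrak m E)$ with $L\hat\otimes_k B$ and then invoke the general machinery of Theorem 3.1 and Lemma 2.1 for this completed tensor product, whereas the paper verifies the flatness, radical, and fiber conditions for $B\rightarrow L\otimes_{A/\mathfrak m}(E/\mathfrak m E)$ directly.
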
 
 	
 	\begin{proof}  The necessary condition follows from Corollary 3.3. We prove that the                                              condition is sufficient. We may suppose that $A$ and $B$ are complete.\
 		
 		The case where $A$ and $B$ are local. Let $\mathfrak{m}$ and $\mathfrak{n}$
 		be the maximal ideals of $A$ and $B$.\ By Theorem 3.1 (i), the 
 		homomorphism $A\rightarrow \ E$ is flat. Since $A$ is reduced (resp.
 		normal) complete and $\mathfrak{m}E$ is contained in the radical of $E$, to
 		show that $E$ is reduced (resp. normal) it suffices, by the previous corollary,
 		to show that the closed fiber $E/\mathfrak{m}E$ is geometrically reduced
 		(resp. geometrically normal) over $A/\mathfrak{m}$. Let $L$ be a finite
 		extension of $A/\mathfrak{m}$. We prove that the ring $L\otimes _{A/%
 			\mathfrak{m}}(E/\mathfrak{m}E)$ is reduced (resp. normal). For this,
 		consider the homomorphism $B\rightarrow L\otimes _{A/\mathfrak{m}}(E/%
 		\mathfrak{m}E)$. By Theorem 3.1 (iii), the homomorphism $B\rightarrow E/%
 		\mathfrak{m}E$ is flat. Since the homomorphism $A/\mathfrak{m}\rightarrow
 		L$ is flat, the homomorphism $(E/\mathfrak{m}E)\rightarrow L\otimes _{A/%
 			\mathfrak{m}}(E/\mathfrak{m}E)$ is also flat. We deduce that the
 		homomorphism $B\rightarrow L\otimes _{A/\mathfrak{m}}(E/\mathfrak{m}E)$ is
 		flat. We show that $\mathfrak{n}(L\otimes _{A/\mathfrak{m}}(E/\mathfrak{m}%
 		E)) $ is contained in the radical of $L\otimes _{A/\mathfrak{m}}(E/\mathfrak{%
 			m}E) $. Let $\mathfrak{Q}$ be a maximal ideal of $L\otimes _{A/\mathfrak{m}%
 		}(E/\mathfrak{m}E)$. Since the homomorphism $A/\mathfrak{m}\rightarrow L$ is
 		integral, the homomorphism $E\rightarrow L\otimes _{A/\mathfrak{m}}(E/%
 		\mathfrak{m}E)$ is also integral. So the inverse image $\mathfrak{Q}^{\prime
 		}$ of $\mathfrak{Q}$ in $E$\ is a maximal ideal of $E$. Hence $\mathfrak{n}E$
 		is contained in $\mathfrak{Q}^{\prime }$ and therefore $\mathfrak{n}%
 		(L\otimes _{A/\mathfrak{m}}(E/\mathfrak{m}E))$ is contained in $\mathfrak{Q}$%
 		. Let $F$ denote the closed fiber $(L\otimes _{A/\mathfrak{m}}(E/\mathfrak{m}%
 		E))\otimes _{B}(B/\mathfrak{n)}$ of the homomorphism $B\rightarrow L\otimes
 		_{A/\mathfrak{m}}(E/\mathfrak{m}E)$. We show that $F$ is geometrically
 		reduced (resp. geometrically normal) over $B/\mathfrak{n}$. Let $M$ be a
 		finite extension of $B/\mathfrak{n}$. Since $E/(\mathfrak{m}E+\mathfrak{n}E)$
 		is isomorphic to $(A/\mathfrak{m})\otimes _{k}(B/\mathfrak{n)}$, $F$ is
 		isomorphic to $L\otimes _{k}(B/\mathfrak{n)}$. Hence $F\otimes _{B/\mathfrak{%
 				n}}M$ is isomorphic to $L\otimes _{k}M$. Since $k$ is perfect, $L\otimes
 		_{k}M$ is regular and therefore  $F\otimes _{B/\mathfrak{n}}M$ is also
 		regular. So the fiber $F$ is geometrically regular over $B/\mathfrak{n}$. As 
 		$B$ is reduced (resp. normal) and complete, it follows from the previous
 		corollary that the ring $L\otimes _{A/\mathfrak{m}}(E/\mathfrak{m}E)$ is
 		reduced (resp. normal).\
 		
 		General case. Let $(\mathfrak{m}_{r})_{r}$ and $(\mathfrak{n}_{s})_{s}$ be the distinct maximal ideals of $A$ and $B$. In view of the previous lemma, it suffices to show that for all $r, s$, the ring $\widehat{A}_{\mathfrak{m}_{r}}$ $\hat{\otimes}_{k}\widehat{B}_{\mathfrak{n}_{s}}$ is reduced ( resp. normal ). We have $A\cong\prod\widehat{A}_{\mathfrak{m}_{r}}$ and $B\cong\prod\widehat{B}_{\mathfrak{n}_{s}}$. Since $A$ and $B$ are reduced ( resp. normal ), $\widehat{A}_{\mathfrak{m}_{r}}$ and $\widehat{B}_{\mathfrak{n}_{s}}$ are also reduced ( resp. normal ), and since $A\hat{\otimes}_{k}B$ is noetherian, it follows from the previous lemma that $\widehat{A}_{\mathfrak{m}_{r}}$ $\hat{\otimes}_{k}\widehat{B}_{\mathfrak{n}_{s}}$
 		is also noetherian. So, by the above case, $\widehat{A}_{\mathfrak{m}_{r}}$ $\hat{\otimes}_{k}\widehat{B}_{\mathfrak{n}_{s}}$ is reduced ( resp. normal ). 
 	\end{proof}

 	\begin{remark}  \upshape   The assertion (ii) of Theorem 4.3 and the above
 		theorem are not true if $k$ is not perfect. Let $p$ be the characteristic of 
 		$k$, $a$ be an element of $k-k^{p}$ and $K$ the extension $k[X]/(X^{p}-a)$ of 
 		$k$, where $X$ is an indeterminate over $k$. The noetherian ring $K\otimes
 		_{k}K$ is not reduced.
 	\end{remark}
 	
 	It remains the "domain" case. For this, we will need the two following lemmas.
 	
 	Recall that if $A$ is a ring, $Spec(A)$ is connected if and only if $A$
 	contains no idempotents than 0 and 1.
 	
 	\begin{lemma}    Let $k$ be a perfect field, $A$ and $B$ two complete
 		noetherian local $k$-algebras with maximal ideals  $\mathfrak{m}$ and $%
 		\mathfrak{n }$. Suppose that $A$ and $B$ are integrally closed and that $A\hat{\otimes}_{k}B$ is
 		noetherian. Then 
 		$A\hat{\otimes}_{k}B$ is a domain if and only if $Spec((A/\mathfrak{m}%
 		)\otimes _{k}(B/\mathfrak{n}))$ is connected.
 	\end{lemma}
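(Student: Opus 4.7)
The plan is to derive the equivalence via a chain: $E=A\hat{\otimes}_k B$ is a domain iff $\mathrm{Spec}(E)$ is connected iff $\mathrm{Spec}(E/\mathfrak{r}E)$ is connected, where $\mathfrak{r}=\mathfrak{m}(A\otimes_k B)+\mathfrak{n}(A\otimes_k B)$; by Lemma 2.2 (ii)(a) the last quotient is canonically isomorphic to $(A/\mathfrak{m})\otimes_k(B/\mathfrak{n})$, which yields the stated biconditional.

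For the first equivalence in the chain, I would note that $A$ and $B$, being complete and integrally closed, are analytically normal, so by the local-complete case treated in the proof of Theorem 5.4 (which uses the perfectness of $k$ and the noetherianity of $E$) the ring $E$ is normal. Then I would invoke the standard structural fact that a noetherian normal ring is a finite direct product of normal domains: for every prime $\mathfrak{p}$ of $E$ the localization $E_{\mathfrak{p}}$ is a normal local ring, hence a domain, so each prime of $E$ contains a unique minimal prime; the finitely many distinct minimal primes $\mathfrak{p}_1,\ldots,\mathfrak{p}_n$ are therefore pairwise comaximal, and the Chinese remainder theorem gives $E\cong\prod_{i=1}^{n}E/\mathfrak{p}_i$ with each factor a normal domain. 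In particular $E$ is a domain if and only if $\mathrm{Spec}(E)$ is connected.

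For the second equivalence, by Lemma 2.1 the ring $E$ is separated and complete in the $\mathfrak{r}E$-adic topology and $\mathfrak{r}E$ lies in the Jacobson radical of $E$, so it suffices to show that the reduction induces a bijection between the idempotents of $E$ and those of $E/\mathfrak{r}E$. For surjectivity, given an idempotent $\bar e\in E/\mathfrak{r}E$ one computes $(2\bar e-1)^{2}=1$, so $2\bar e-1$ is a unit, and Hensel's lemma applied to $T^{2}-T$ furnishes a unique lift $e\in E$ with $e^{2}=e$. For injectivity, if $e_1,e_2\in E$ are idempotents with $e_1-e_2\in\mathfrak{r}E$, a direct expansion gives $(e_1-e_2)^{3}=e_1-e_2$, hence $(e_1-e_2)\bigl[(e_1-e_2)^{2}-1\bigr]=0$; since $(e_1-e_2)^{2}$ lies in the Jacobson radical, $(e_1-e_2)^{2}-1$ is a unit, forcing $e_1=e_2$. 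The main obstacle is the normality of $E$, which is a genuine application of Theorem 5.4 (relying on Nishimura's localization theorem and on $k$ being perfect to secure geometric regularity of the closed fibre). Once that is in hand, the structural decomposition and the idempotent-lifting are formal.
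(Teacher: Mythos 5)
Your proposal is correct and follows the same route as the paper: deduce from Theorem 5.4 that $E$ is normal, reduce the question to the connectedness of $\mathrm{Spec}(E/\mathfrak{r}E)$, and identify $E/\mathfrak{r}E$ with $(A/\mathfrak{m})\otimes_k(B/\mathfrak{n})$. The only difference is that where the paper simply cites Greco--Salmon (Proposition 10.8) for the statement that an adic ring which is locally a domain is a domain iff the spectrum of its reduction is connected, you prove this step directly via the decomposition of a noetherian normal ring into a finite product of normal domains together with unique idempotent lifting along the complete ideal $\mathfrak{r}E$ contained in the radical.
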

 	
 	\begin{proof} The ring $E$ is locally a domain since, by the previous theorem, it is
 		normal. If $\mathfrak{r}$ is the ideal $(\mathfrak{m}\otimes
 		_{k}B)+(A\otimes _{k}\mathfrak{n})$ of $A\otimes _{k}B$, then by \cite[Proposition
 		10.8]{Greco}, the $\mathfrak{r}E$-adic ring $E$ is a domain if and only if $Spec(E/%
 		\mathfrak{r}E)$ is connected. The equivalence follows from the fact that $E/%
 		\mathfrak{r}E$ is isomorphic to $(A/\mathfrak{m})\otimes _{k}(B/\mathfrak{n}%
 		) $.
 	\end{proof}
 	
 	\begin{lemma}  Let $k$ be a ring, $A$ and $B$ two preadic noetherian
 		$k$-algebras such that the ring $A\hat{\otimes}_{k}B$ is noetherian
 		and let $\mathfrak{a}$ be an ideal of definition of $A$. For all finitely generated $%
 		A $-module $M$ with the $\mathfrak{a}$-preadic topology, the completed
 		tensor product $M\hat{\otimes}_{k}B$ is identified with $M\otimes _{A}(A\hat{%
 			\otimes}_{k}B)$.
 	\end{lemma}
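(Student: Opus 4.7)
The strategy is to reduce to the case of finite free $A$-modules by means of a finite presentation. Since $A$ is noetherian and $M$ is finitely generated, choose an exact sequence
$$A^p \xrightarrow{\phi} A^q \to M \to 0.$$
For any $n\geq 0$, the completed tensor product commutes with finite direct sums and $A\hat{\otimes}_k B = E$, so one has a natural identification
$$A^n \hat{\otimes}_k B \;\cong\; E^n \;\cong\; A^n \otimes_A E,$$
which settles the free case. It is also routine to check that these identifications intertwine the map induced by $\phi$ on the two sides: both give the matrix of $\phi$ acting on $E^q$ from $E^p$.

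Next I would apply the two functors $-\hat{\otimes}_k B$ and $-\otimes_A E$ to the presentation and compare. Right exactness of $-\otimes_A E$ yields
$$E^p \to E^q \to M\otimes_A E \to 0.$$
For $-\hat{\otimes}_k B$, note that $M\otimes_k B$ is, by right exactness of ordinary tensor product, the cokernel of $\phi\otimes\mathrm{id}_B\colon A^p\otimes_k B \to A^q\otimes_k B$, equipped with the quotient of the tensor product topology of $A^q\otimes_k B$. By the hypothesis that $E$ is noetherian, Lemma 2.1 implies that $E$ is a Zariski ring and that $E^q$ is a finitely generated $E$-module; the $E$-submodule $N\subset E^q$ generated by the image of $A^p\otimes_k B$ (equivalently, the image of $E^p\to E^q$) is therefore finitely generated, hence closed in $E^q$, and $E^q/N$ is a finitely generated $E$-module, automatically complete Hausdorff. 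The completion of $M\otimes_k B$ is thus the topological cokernel $E^q/\overline{\mathrm{Im}(\phi\otimes\mathrm{id}_B)}=E^q/N$, giving
$$M\hat{\otimes}_k B \;\cong\; E^q/N \;\cong\; M\otimes_A E,$$
with the isomorphism being the natural scalar-extension map.

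The step I expect to be most delicate is the closure identification: verifying that the closure in $E^q$ of the image of $\phi\otimes\mathrm{id}_B$ coincides with the $E$-submodule $N$ it generates. This rests crucially on Lemma 2.1 (ii), which supplies the Zariski property of $E$ and hence the closedness of finitely generated submodules in finitely generated $E$-modules. One must also make sure the topology the completion uses on $M\otimes_k B$ (the quotient from $A^q\otimes_k B$) agrees, via the natural surjection $A^q\otimes_k B\twoheadrightarrow M\otimes_k B$, with the tensor product topology attached to the $\mathfrak{a}$-adic topology on $M$; this is where the finite generation of $M$ (allowing the Artin--Rees-type comparison between $\mathfrak{a}^n M$ and $M\cap\mathfrak{a}^n A^q$) is used.
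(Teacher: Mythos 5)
Your argument is correct in substance, but it takes a genuinely different route from the paper: the paper's proof is a two-line citation, first identifying $M\hat{\otimes}_{k}B$ with $M\hat{\otimes}_{A}E$ by the base-change formula \cite[\textbf{0}, (7.7.8)]{GrothendieckDieudonne}, and then invoking \cite[\textbf{0}, (7.7.9)]{GrothendieckDieudonne} for the adic noetherian $A$-algebra $E$ to conclude $M\hat{\otimes}_{A}E\cong M\otimes_{A}E$. What you have written is essentially an unwinding of the proof of that second EGA result via a finite presentation, right exactness, and the Zariski property of $E$; it is more self-contained, at the cost of having to handle the topological bookkeeping by hand. Two small points on that bookkeeping. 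First, in the closure identification $\overline{\mathrm{Im}(\phi\otimes\mathrm{id}_B)}=N$ you only argue the inclusion $\overline{\mathrm{Im}(\phi\otimes\mathrm{id}_B)}\subseteq N$ (closedness of the finitely generated submodule $N$); the reverse inclusion $N\subseteq\overline{\mathrm{Im}(\phi\otimes\mathrm{id}_B)}$ also needs a word --- it follows because every element of $E$ is a limit of elements coming from $C=A\otimes_k B$ and the image of $\phi\otimes\mathrm{id}_B$ is a $C$-submodule, so continuity of the module operations gives density of that image in its $E$-span. Second, the Artin--Rees comparison you flag at the end is not actually needed: the lemma puts the $\mathfrak{a}$-preadic topology on $M$ by hypothesis, and the quotient topology on $M\otimes_k B$ induced from $A^q\otimes_k B$ has basic open submodules $\mathfrak{a}^i(M\otimes_k B)+\mathfrak{b}^j(M\otimes_k B)$, which are literally the basic open submodules of the tensor product topology attached to the $\mathfrak{a}$-preadic topology on $M$; the two topologies coincide on the nose. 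With those two remarks supplied, your proof is complete and independent of the EGA references the paper relies on.
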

 	
 	\begin{proof} By \cite[\textbf{0}, (7.7.8)]{GrothendieckDieudonne}, $M\hat{\otimes}_{k}B$\ is identified
 		with $M\hat{\otimes}_{A}E$ , and by \cite[\textbf{0}, (7.7.9)]{GrothendieckDieudonne} applied to the
 		topological adic noetherian $A$-algebra $E$, $M\hat{\otimes}_{A}E$ is
 		identified with $M\otimes _{A}E$.
 	\end{proof}
 	
 	\begin{theorem}    Let $k$ be an algebraically closed field, $A$
 		and $B$ two noetherian local $k$-algebras such that the ring $A\hat{\otimes}%
 		_{k}B$ is noetherian. Then $A\hat{\otimes}_{k}B$ is a domain if and only if $%
 		A$ and $B$ are analytically irreducible.
 	\end{theorem}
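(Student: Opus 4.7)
The plan is to treat the two implications separately.

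For the necessity, I would start from the hypothesis that $E = A\hat{\otimes}_{k}B$ is a noetherian domain. Since $\widehat{A}$ is a Zariski ring, Theorem 3.1 (ii) makes $\widehat{A}\to E$ faithfully flat and therefore injective; thus $\widehat{A}$ embeds into the domain $E$ and is itself a domain, i.e.\ $A$ is analytically irreducible, and the argument for $B$ is symmetric.

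For the sufficiency, using that $E \cong \widehat{A}\hat{\otimes}_{k}\widehat{B}$, I would first replace $A$ and $B$ by their completions, reducing to the case that $A$ and $B$ are complete local noetherian domains. Let $A'$ (resp.\ $B'$) be the integral closure of $A$ (resp.\ $B$) in its fraction field. Since complete local noetherian rings are Nagata and Henselian, $A'$ and $B'$ are finite as modules over $A$ and $B$ (hence complete), and each is local because it is a domain finite over a Henselian base. Thus $A'$ and $B'$ are complete local normal noetherian $k$-algebras.

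The next step is to embed $E$ into $A'\hat{\otimes}_{k}B'$. Lemma 5.6 gives $A'\hat{\otimes}_{k}B \cong A'\otimes_{A}E$, and the flatness of $A\to E$ (Theorem 3.1 (i)) applied to the inclusion $A\hookrightarrow A'$ produces an injection $E\hookrightarrow A'\hat{\otimes}_{k}B$. The target is noetherian, being finite over $E$, so Theorem 3.1 (i) applies with $A'$ in place of $A$, and the same flatness argument with $B\hookrightarrow B'$ yields $A'\hat{\otimes}_{k}B\hookrightarrow A'\hat{\otimes}_{k}B'$. Composing, $E\hookrightarrow A'\hat{\otimes}_{k}B'$, and the target is noetherian for the same finiteness reason. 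By Lemma 5.5 applied to $A'$ and $B'$, it suffices to show that $\mathrm{Spec}((A'/\mathfrak{m}')\otimes_{k}(B'/\mathfrak{n}'))$ is connected, where $\mathfrak{m}'$ and $\mathfrak{n}'$ are the maximal ideals of $A'$ and $B'$.

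The main obstacle is the connectedness statement, which is exactly where the algebraic closedness of $k$ is needed. Setting $L = A'/\mathfrak{m}'$ and $M = B'/\mathfrak{n}'$, the ring $L\otimes_{k}M$ is noetherian by Lemma 4.2 (i). By a classical result, when $k$ is algebraically closed the tensor product of any two field extensions of $k$ is a domain (concretely, $k$ is perfect so $L\otimes_{k}M$ is reduced, and $k$ is algebraically closed in $L$, so $L/k$ is a regular field extension, forcing a unique minimal prime). In particular $\mathrm{Spec}(L\otimes_{k}M)$ is irreducible, hence connected. Lemma 5.5 then gives that $A'\hat{\otimes}_{k}B'$ is a domain, and the embedding from the previous paragraph transfers this property to $E$.
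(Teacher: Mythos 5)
Your proof is correct and follows essentially the same route as the paper's: reduce to complete local domains, pass to the integral closures $A'$ and $B'$ via Nagata's theorem, embed $E$ into $A'\hat{\otimes}_{k}B'$ using the flatness from Theorem 3.1 together with the identification $A'\hat{\otimes}_{k}B\cong A'\otimes_{A}E$, and conclude from the connectedness criterion for the normal complete factors, where algebraic closedness of $k$ makes $(A'/\mathfrak{m}')\otimes_{k}(B'/\mathfrak{n}')$ a domain. The only slip is in the cross-references: the module identification is Lemma 5.7 and the domain/connectedness criterion is Lemma 5.6, so your citations are each off by one.
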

 	
 	\begin{proof} The necessary condition follows from Corollary 3.3. We prove that the   condition is sufficient. We may suppose                                                     that $A$ and $B$ are complete.  By
 		Nagata Theorem, the integral
 		closure $A^{\prime }$ (resp. $B^{\prime }$) of $A$ (resp. $B$) is a finitely
 		generated $A$-module (resp. $B$-module) and a complete noetherian local
 		ring. We show at first that $A^{\prime }\hat{\otimes}_{k}B^{\prime }$ is a
 		domain. The ring $A^{\prime }$ (resp. $B^{\prime }$) dominates $A$ (resp. $B$)
 		then, if $\mathfrak{m}$ (resp. $\mathfrak{n}$) is the maximal ideal of $A$
 		(resp. $B$) and $\mathfrak{m}^{\prime }$ (resp. $\mathfrak{n}^{\prime }$) is
 		the maximal ideal of $A^{\prime }$ (resp. $B^{\prime }$)), the homomorphism $A/%
 		\mathfrak{m}\rightarrow A^{\prime }/\mathfrak{m}^{\prime }$ (resp. $B/%
 		\mathfrak{n}\rightarrow B^{\prime }/\mathfrak{n}^{\prime }$) is finite. We
 		deduce that the homomorphism {\footnotesize $(A/\mathfrak{m})\otimes _{k}(B/%
 			\mathfrak{n})\rightarrow (A^{\prime }/\mathfrak{m}^{\prime })\otimes
 			_{k}(B^{\prime }/\mathfrak{n}^{\prime })$ } is also finite. By Lemma 2.1
 		(iii), applied to $A\hat{\otimes}_{k}B$ and then to $A^{\prime }\hat{\otimes}%
 		_{k}B^{\prime }$, the ring $A^{\prime }\hat{\otimes}_{k}B^{\prime }$\ is
 		noetherian. In view of Lemma 5.6, it remains to show that $Spec((A^{\prime }/%
 		\mathfrak{m}^{\prime })\otimes _{k}(B^{\prime }/\mathfrak{n}^{\prime }))$ is
 		connected, which results from the fact that the ring $(A^{\prime }/\mathfrak{%
 			m}^{\prime })\otimes _{k}(B^{\prime }/\mathfrak{n}^{\prime })$ is a domain.
 		We show that $A\hat{\otimes}_{k}B$ is identified with a subring of $%
 		A^{\prime }\hat{\otimes}_{k}B^{\prime }$. By Theorem 3.1, the ring $A\hat{%
 			\otimes}_{k}B$ is identified with a subring of $%
 		A^{\prime }\otimes _{A}(A\hat{\otimes}_{k}B)$. As the $\mathfrak{m}$-preadic
 		topology of $A^{\prime }$ is also the $\mathfrak{m}^{\prime }$-preadic
 		topology, it follows from Lemma 5.7 that $A^{\prime }\otimes _{A}(A\hat{%
 			\otimes}_{k}B)$ is identified with the ring $A^{\prime }\hat{\otimes}_{k}B$.
 		Hence $A\hat{\otimes}_{k}B$ is identified with a subring of $A^{\prime }\hat{%
 			\otimes}_{k}B$. Since $A^{\prime }\hat{\otimes}_{k}B$ is noetherian, the
 		same reasoning shows that $A^{\prime }\hat{\otimes}_{k}B$ is identified with
 		a subring of $A^{\prime }\hat{\otimes}_{k}B^{\prime }$. We conclude that $A%
 		\hat{\otimes}_{k}B$ is a domain.
 	\end{proof}
 	
 	\begin{remarks}  \upshape

 		Let $\mathbf{R}$ be a property of a noetherian local ring and consider
 		for this property the following conditions:
 		\begin{itemize}
 			\item[$(\mathbf{R}_{\mathbf{0}%
 		}^{\prime})$]  Every regular local ring satisfies $\mathbf{%
 			R}$.
 		
 			\item[$(\mathbf{R}_{\mathbf{I}}^{\prime })$] For every flat local homomorphism $%
 		A\rightarrow B$\ of noetherian local with geometrically $\mathbf{R}$ fibers,
 		si $A$ satisfies $\mathbf{R}$ then $B$ satisfies $\mathbf{R}$.
 		
 			\item[$(\mathbf{R}_{\mathbf{II}})$] For every flat local homomorphism $A\rightarrow B$
 		\ of noetherian local rings, si $B$ satisfies $\mathbf{R}$ then $A$
 		satisfies $\mathbf{R}$.
 		
 			\item[$(\mathbf{R}_{\mathbf{IV}})$] For every noetherian local ring and for all non
 		zero divisor $t$ in its maximal ideal, if $A/tA$ satisfies $\mathbf{R}$ then $%
 		A$ satisfies $\mathbf{R}$.
 		
 			\item[$(\mathbf{R}_{\mathbf{V}})$] If a noetherian local $A$ satisfies $\mathbf{R}$,
 		then $A_{\mathfrak{p}}$ satisfies $\mathbf{R}$ for every prime ideal $%
 		\mathfrak{p}$ of $A$.
			\end{itemize}
		 
 	\begin{itemize}
 		\item[(i)] The same reasoning as in the proof of Theorem 5.4, where we replace Theorem 5.1 by Theorem B in \cite{Murayama}, show that we have the following result.

 		 {\bf Theorem.} {\it       Let $k$ be a perfect field, $A$ and $B$
 			two noetherian semi-local $k$-algebras such that the ring $A\hat{\otimes}%
 			_{k}B$ is noetherian. If \, $\mathbf{R}$\ satisfies $(\mathbf{R}_{\mathbf{0}%
 		}^{\prime })$, $(\mathbf{R}%
 			_{\mathbf{I}}^{\prime })$, $(\mathbf{R}_{\mathbf{II}})$, $(\mathbf{R}_{\mathbf{IV%
 			}})$, $(\mathbf{R}_{\mathbf{V}})$, then $A\hat{\otimes}_{k}B$ satisfies $\mathbf{%
 				R}$ if and only if $\widehat{A}$ and $\widehat{B}$ satisfies $\mathbf{R}$.}
 \item[(ii)]  The condition $\mathbf{R}_{\mathbf{I}}^{\prime }$ does not hold for the
property $\mathbf{R}$\ of being an domain (cf. \cite{Murayama}). 
 		
 	 \item[(iii)] The condition $\mathbf{R}_{\text{I}}^{\prime }$ does not hold
for the property $\mathbf{R}$\ of being an almost Cohen-Macaulay ring (cf.
\cite[Exemple 3.2]{Tabaa1}).

 		\end{itemize}	
\end{remarks}


\begin{thebibliography}{99}
 		
 	 \bibitem{Avramov} 
 	 L. L. Avramov, \textit{Flat morphisms of complete intersections}, Soviet Math. Dokl. \textbf{16} (1975) 1413-1417.
 		
  \bibitem{Greco} 
  S. Greco and P. Salmon, \textit{Topics in $\mathfrak{m}$-adic Topologies}, Ergebnisse der Mathematik und ihrer Grenzgebiete, 58, Springer-Verlag, New York-Berlin,
 		 1971.
 		
 	 \bibitem{Grothendieck1} 
 	 A. Grothendieck, \textit{El\'{e}ments de G\'{e}om\'{e}trie Alg\'{e}brique}, Inst. Hautes Etudes Sci. Publ. Math., 20, 1964.
 		
 	 \bibitem{Grothendieck2} 
 		A. Grothendieck, \textit{El\'{e}ments de G\'{e}om\'{e}trie Alg\'{e}brique}, Inst. Hautes Etudes Sci. Publ. Math., 24, 1965.
 		
 	 \bibitem{GrothendieckDieudonne} 
 		 A. Grothendieck and J. A. Dieudonn\'{e}, \textit{El\'{e}ments de G\'{e}om\'{e}trie Alg\'{e}brique I}, Springer-Verlag, Berlin, 1971.
 		
 \bibitem{Kang} 
 M.C. Kang, \textit{Almost Cohen-Macaulay modules}, Comm. Algebra \textbf{29} (2001) 781-787.
 		


 		
  \bibitem{Matsumura} 
  H. Matsumura, \textit{Commutative Ring Theory}, Cambridge University Press, Cambridge, 1986.
 		
 	 \bibitem{Murayama} 
 	 T. Murayama, \textit{A uniform treatment of Grothedieck's localization problem}, Compos. Math. \textbf{158} (2022) 57-88. 
 		
  \bibitem{Nishimura} 
 		J.I. Nishimura, \textit{On ideal-adic completion of noetherian rings}, J. Math. Kyoto Univ. \textbf{21} (1981) 153-169.
 	
  \bibitem{Serre} 
  J.P. Serre, \textit{Local Algebra},  Springer Monographs in Mathematics. Springer-Verlag, Berlin, 2000. 	
 		
 \bibitem{Sharp1} 
R.Y. Sharp, \textit{The dimension of the tensor product of two fields}, Bull. London Math. Soc. \textbf{9} (1977) 42-48.
 		
 	 \bibitem{Sharp2} 
 	  R.Y. Sharp, \textit{Simplifications in the theory of tensor products of field extensions}, J. London Math. Soc. \textbf{15} (1977) 48--50. 
 		
\bibitem{Shaul} 
 L. Shaul, \textit{Tensor product of dualizing complex}, J. Commut. Algebra \textbf{10} (2018) 243-263. 
 		
 	
  \bibitem{Tabaa1} 
  M. Taba\^{a}, \textit{Sur le produit tensoriel d'alg\`ebres}, Math. Scand. \textbf{119} (2016) 5-13.
 		
 \bibitem{Tabaa2} 
  M. Taba\^{a}, \textit{Completed tensor product and flatness}, Alg. Colloquium \textbf{27}:3 (2020) 599-606.
 		
 		
 		\bibitem{Tavanfar} E. Tavanfar, \textit{Test modules, weakly regular homomorphisms and complete intersection dimension}, Commut Algebra \textbf{15} (3) (2023) 417-455.
 		
 	\bibitem{Tousi} 
 		M. Tousi and S. Yassemi, \textit{Tensor products of some special rings}, J. Algebra \textbf{268} (2003) 672-676.
 		
 		\bibitem{Watanabe} 
 		 K. Watanabe, T. Ishikawa, S. Tachibana and K. Otsuka, \textit{On tensor products of Gorenstein rings}, J. Math. Kyoto Univ. \textbf{9} (1969) 413-423.
 		
 	\end{thebibliography}
 \end{document}